\documentclass[11pt]{amsart}

\usepackage[T1]{fontenc}
\usepackage[utf8]{inputenc}
\usepackage{fouriernc}       
\usepackage{microtype}

\usepackage[
  top    = 1.35in,
  bottom = 1.35in,
  inner  = 1.25in,
  outer  = 1.25in,
  footskip = 30pt
]{geometry}

\usepackage{titlesec}

\titleformat{\section}
  [block]
  {\normalfont\scshape\centering}
  {\S\,\Roman{section}.}
  {0.6em}
  {}
  [\vspace{3pt}{\titlerule[0.4pt]}\vspace{-2pt}]

\titlespacing*{\section}{0pt}{2.4ex plus .4ex}{1.6ex plus .2ex}

\titleformat{\subsection}
  [runin]
  {\normalfont\scshape}
  {\thesubsection.}
  {0.5em}
  {}
  [.\enspace]

\titlespacing*{\subsection}{0pt}{1.4ex plus .3ex}{0pt}

\renewcommand{\thesection}{\Roman{section}}
\renewcommand{\thesubsection}{\thesection.\arabic{subsection}}

\usepackage{amsmath, amssymb, amsfonts, amsthm, mathtools}
\numberwithin{equation}{section}
 \usepackage{booktabs}
 \usepackage{array}
 \usepackage{tabularx}

\newtheoremstyle{classical}
  {7pt}{5pt}{\itshape}{\parindent}{\scshape}{.}{ }{}

\newtheoremstyle{classicaldef}
  {7pt}{5pt}{\normalfont}{\parindent}{\scshape}{.}{ }{}

\theoremstyle{classical}
\newtheorem{theorem}{Theorem}[section]
\newtheorem{proposition}[theorem]{Proposition}
\newtheorem{lemma}[theorem]{Lemma}

\theoremstyle{classicaldef}
\newtheorem{definition}[theorem]{Definition}

\newtheorem{remark}[theorem]{Remark}

\usepackage{graphicx}
\usepackage{booktabs}
\usepackage{array}
\usepackage{subcaption}
\usepackage{tikz}
\usetikzlibrary{
  fit, backgrounds, arrows.meta,
  positioning, calc, decorations.pathreplacing
}
\usepackage{tikz-cd}
\usepackage{float}

\usepackage{enumitem}
\setlist{leftmargin=*, itemsep=2pt, topsep=4pt}

\usepackage[square]{natbib}         
\usepackage{hyperref}
\hypersetup{colorlinks,allcolors=black}
\usepackage[nameinlink, noabbrev]{cleveref}
\begin{document}

\title{On $ U(1)^{n-2} $-Invariant Special Lagrangian $ n $-Folds}
\author{By Mia S.L.\ Beard \\ \textit{University of Oxford}}
\date{\today}
\maketitle

\begin{abstract}

  This paper develops a construction of families of $ U(1)^{n-2} $-invariant special Lagrangian $ n $-folds in $ \mathbb{C}^{n} $, extending the analytic framework introduced by Joyce ($ n = 3 $) to arbitrary dimension. By reducing the special Lagrangian condition to a quasilinear elliptic system of two-dimensional non-linear Cauchy-Riemann equations, we analyse both the resulting geometry and its degenerations at singular points. We show that the structure and multiplicity of singularities are governed by an associated polynomial arising from the symmetry reduction. Explicit examples are constructed, including affine and perturbative solutions, and are compared with the classical Harvey-Lawson $ U(1)^{n-1} $-invariant submanifolds. We further show that the key elements of Joyce's analysis in the non-singular case, in particular the potential formulation and Dirichlet problem, extend to this higher-dimensional setting, with the proofs unchanged. 
  
\end{abstract}

\vspace{1.5em}

\section{Introduction}

There is well-established theory surrounding special Lagrangian $ 3 $-folds in $ \mathbb{C}^{3} $. Of particular relevance here is Joyce's paper \cite{joyce2005u1}, which establishes a construction for $ U(1) $-invariant special Lagrangian $ 3 $-folds in $ \mathbb{C}^{3} $, whereby a $ U(1) $-invariant submanifold $ N $, which can be written as a graph of functions $ u, v: \mathbb{R}^{2} \to \mathbb{R} $, is realised as a special Lagrangian $ 3 $-fold if and only if it satisfies a system of non-linear Cauchy-Riemann equations which depend on $ a $, where $ a $ is defined by $ |z_{1}|^{2} - |z_{2}|^{2} = 2a $. If $ a \neq 0 $, then $ u, v $ are smooth and $ N $ is a non-singular special Lagrangian $ 3 $-fold, but if $ a = 0 $, then there may be points of the form $ (x, y) $, where $ u, v $ are not differentiable, which corresponds to singular points of $ N $. In the non-singular case, the existence and uniqueness of a large class of non-singular $ U(1) $-invariant special Lagrangian $ 3 $-folds follows intuitively. In Joyce's further papers \cite{joyce2005u2, joyce2005u3}, it is recognised that in the case of $ a = 0 $, the Cauchy-Riemann equations are not always elliptic, leading to technical analytic issues. Proving existence and uniqueness in this case is proven using a priori estimates for derivatives of solutions, using these to show existence and uniqueness of weak solutions $ u, v $ to two Dirichlet problems when $ a = 0 $, which are continuous and weakly differentiable. In studying the singularities, it is found that, under mild conditions, the singularities are isolates, and have a multiplicity $ n > 0 $, and one of two types.

There is, however, comparatively little known about explicit symmetric constructions in higher-dimensions. The present paper addresses this gap by extending Joyce's framework to arbitrary dimension, considering the larger symmetry group $ U(1)^{n-2} $, recovering a rich geometry while retaining analytic tractability.

To orient the reader, we briefly describe the construction here.

Let $ \mathbb{C}^{n} $ be equipped with its standard symplectic structure. Motivated by Joyce's $ U(1) $-invariant construction of special Lagrangian submanifolds in $ \mathbb{C}^{3} $ \cite{joyce2005u1, joyce2005u2, joyce2005u3}, we consider the $ U(1)^{n-2} $-action on $ \mathbb{C}^{n} $, given by

\begin{equation}
    (e^{i \theta_{1}}, \dots, e^{i \theta_{n-2}}) \cdot (z_{1}, \dots, z_{n}) = (e^{i \theta_{1}}z_{1}, \dots, e^{i \theta_{n-2}}z_{n-2}, e^{-i(\theta_{1} + \cdots + \theta_{n-2})}z_{n-1}, z_{n}),
\end{equation}

so that the product $ z_{1} \cdots z_{n-1} $ is invariant.

Let $ S \subset \mathbb{R}^{2} $ be a domain with coordinates $ (x, y) $, and let $ u, v: S \rightarrow \mathbb{R} $ be functions in $ C^{1} $. Fix real constants $ a_{1}, \dots, a_{n-1} $, and define

\begin{equation*}
    P(w) = \prod_{k=1}^{n-1}(w+a_{k}).
\end{equation*}

For each $ (v, y) \in \mathbb{R}^{2} $, there exists a unique real solution $ w = w(v, y) $ of $ P(w) = v^{2} + y^{2} $ satisfying $ w \geq - \min \{a_{1}, \dots, a_{n-1} \} $. Using this data, we define a subset $ N \subset \mathbb{C}^{n} $ by fixing the moment-map level sets associated to the $ U(1)^{n-2} $-action,

\begin{equation*}
N = \Bigl\{ z \in \mathbb{C}^{n} :
|z_{j}|^{2} - |z_{n-1}|^{2} = a_{j} - a_{n-1}\ (j=1, \dots, n-2),\ 
i^{n-3} z_{1} \cdots  z_{n-1} = v+iy,\  z_{n} = x+iu \Bigr\}.
\end{equation*}

We show that $ N $ is a (possibly singular) $ U(1)^{n-2} $-invariant special Lagrangian $ n $-fold if and only if $ u $ and $ v $ satisfy 

\begin{equation*}
    u_{x} = v_{y}, \qquad v_{x} = -P'(w(v, y)) u_{y},
\end{equation*}

where $ P'(w) = \sum_{k=1}^{n-1} \prod_{i \neq k}(w + a_{i}) $.

If $ \min(a_{1}, \dots, a_{n-1}) $ is achieved by more than one of $ a_{1}, \dots, a_{n-1} $, then $ N $ is singular at points where $ v = y = 0 $. This is because the $ U(1)^{n-2} $-action is not free at such points, but the orbit collapses to $ U(1)^{k} $ for $ 0 \leq k < n-2 $.

The layout of this paper will be as follows. In Section~\ref{SECT:BackgroundOnSLManifolds} we give the essential background information on special Lagrangian submanifolds which will be needed, before introducing the symplectic and reduction geometry which underlies our construction in Section~\ref{SECT:SetUpOfGeometricConstruction}. The main theorem is precisely stated in Section~\ref{SEC:MainTheorem}, which we then prove in Section~\ref{SECT:ProofOfTheMainTheorem}. As a sanity check, we show that one can recover Joyce's $ n = 3 $ construction in Section~\ref{SECT:RecoveryOfJoycesEQS}, and give some examples in the form of Harvey-Lawson $ U(1)^{n-1} $-invariant solutions and translations in $ z_{n} $, and affine and perturbative special Lagrangians. Finally, we generalise Joyce's analysis of the reduced equations in Section~\ref{SECT:GeneralisingJoycesAnalysisOnTheReducedEquations} for the non-singular case, demonstrating that the proofs of Joyce on generating $ u, v $ from a potential and the Dirichlet problem is unchanged, commenting also on the winding number technique.

\section{Background on Special Lagrangian Submanifolds}\label{SECT:BackgroundOnSLManifolds}

We assume that the reader is familiar with the basic notions of calibrated geometry, and in particular, the framework introduced by Harvey and Lawson \cite{harvey_calibrated_1982}. In this section, we recall only those facts concerning special Lagrangian submanifolds that will be used later in this paper, fixing conventions and notation.

In $ \mathbb{C}^{n} $, special Lagrangian submanifolds are defined as follows.

\begin{definition}[Special Lagrangian manifolds in $ \mathbb{C}^{n} $]

Let $ (z_{1}, \dots, z_{n}) $ denote standard complex coordinates on $ \mathbb{C}^{n} $, equipped with the Riemannian metric $ g = |dz_{1}|^{2} + \cdots + |dz_{n}|^{2} $, the Kähler form $ \omega = \tfrac{i}{2}(dz_{1} \wedge d \bar{z}_{1} + \cdots + dz_{n} \wedge d \bar{z}_{n}) $, and the holomorphic volume form $ \Omega = dz_{1} \wedge \cdots \wedge dz_{n} $. Let $ L \subset \mathbb{C}^{n} $ be an oriented real submanifold of dimension $ n $. We say that $ L $ is a special Lagrangian submanifold of $ \mathbb{C}^{n} $ if it is calibrated with respect to $ \Re(\Omega) $. Equivalently, $ L $ admits an orientation making it into a special Lagrangian $ n $-fold if and only if $ \omega|_{L} \equiv 0 $ and $ \Im(\Omega)|_{L} \equiv 0 $ (see \cite[Corollary~III.11.]{harvey_calibrated_1982}).
    
\end{definition}

Special Lagrangian submanifolds have attracted significant attention following the work of \cite{harvey_calibrated_1982}. One of the topics which brought the importance of special Lagrangians into light was the Strominger-Yau-Zaslow (SYZ) conjecture \cite{strominger1996mirror}, which proposes that mirror symmetry for Calabi-Yau threefolds should be understood in terms of dual special Lagrangian torus fibrations. A central difficulty in this programme is that such fibrations are expected to have singular fibres, and understanding the structure of these singularities remains a key problem. 

In \cite{mclean1998deformations}, the deformations of calibrated submanifolds was studied, where McLean showed that the moduli space of special Lagrangian deformations of a compact special Lagrangian $ L $ is a smooth manifold of dimension equal to $ b^{1}(L) $ (the first Betti number).

 An influential set of papers were \cite{joyce2005u1, joyce2005u2, joyce2005u3}, whereby Joyce used symmetry-reduction techniques to construct large families of $ U(1) $-invariant special Lagrangian $ 3 $-folds in $ \mathbb{C}^{3} $. His studies included the singular case, where Joyce was able to prove existence and uniquness of continuous weak solutions, and understand the geometry of the singularities, as well as constructing $ U(1) $-invariant special Lagrangian fibrations of open sets in $ \mathbb{C}^{3} $. In a complementary direction, foundational research \cite{HaskinsCones} focused on the classification, existence, and geometric properties of special Lagrangian cones in $ \mathbb{C}^{m} $, the main result being the construction of an infinite family of special Lagrangian cones in $ \mathbb{C}^{3} $, each of which has a toroidal link.

We recall a characterisation of special Lagrangian $ n $-planes in $ \mathbb{C}^{n} $. Following Joyce \cite[Proposition 2.4.]{joyce2005u1}, this characterisation can be phrased in terms of a calibrated cross product associated to $ \Re(\Omega) $. To begin with, let us recall the definition of the calibrated cross product, which is the higher-dimensional analogue of the ordinary cross product in $ \mathbb{R}^{3} $, defined in the setting of calibrated geometry.

\begin{definition}[Cross Product on $ \mathbb{C}^{n} $]

Let $ (\mathbb{C}^{n}, g, \Re(\Omega)) $ be equipped with its standard flat metric and calibration. The cross product associated to $ \Re(\Omega) $ is the real multilinear map $ \times: (\mathbb{C}^{n})^{n-1} \rightarrow \mathbb{C}^{n} $, uniquely defined by the requirement that, for all vectors $ v_{1}, \dots, v_{n-1} \in \mathbb{C}^{n} $,

\begin{equation}\label{EQ:CrossProductOnComplexSpace}
    g(v_{1} \times \cdots \times v_{n-1}, w) = \Re(\Omega)(v_1,\dots,v_{n-1}, w).
\end{equation}

This defines a real tensor on $ \mathbb{R}^{2n} $.
    
\end{definition}

We offer a remark on how we will calculate this object.

\begin{remark}{Coordinate Expression of the Calibrated Cross Product}

    For explicit computations, it is convenient to describe the cross product in complex coordinates. Let $ \Phi := \mathrm{Re}(\Omega) $ denote the special Lagrangian calibration on $ \mathbb{C}^{n} $. Given vectors $ v_{1},\dots, v_{n-1} \in \mathbb{C}^{n} $, define the real $ 1 $-form

    \begin{equation*}
        W_{\Phi}(v_1,\dots,v_{n-1})(w) := \Phi(v_{1},\dots,v_{n-1}, w), \qquad w \in \mathbb{C}^{n},
    \end{equation*}

    which is an element of $ \Lambda^{1} T_{\mathbb{R}}^{*} \mathbb{C}^{n} $. We note that we have introduced the notation $ W_{\Phi} $ to make it clear that this is a generalised calibrated cross product.

    Under complexification, we have the decomposition

    \begin{equation*}
        (\Lambda^{1} T_{\mathbb{R}}^{*} \mathbb{C}^{n}) \otimes_{\mathbb{R}}\mathbb{C} = \langle dz_{1},\dots, dz_{n} \rangle_{\mathbb{C}} \;\oplus\; \langle d\bar{z}_{1},\dots,d\bar{z}_{n} \rangle_{\mathbb{C}}.
    \end{equation*}

    Accordingly, the contracted form may be written as

    \begin{equation*}
        W_{\Phi}(v_{1},\dots, v_{n-1}) =
\sum_{j=1}^{n} \bigl( a_{j} \,dz_{j} + \bar{a}_{j} \,d \bar{z}_j \bigr), \qquad a_{j} \in \mathbb{C}.
    \end{equation*}

    Contracting this $ 1 $-form with the metric $ g $ yields the corresponding vector

    \begin{equation*}
        v_{1} \times \cdots \times v_{n-1} = \sum_{j=1}^{n} \bigl(a_{j} \tfrac{\partial}{\partial \bar z_{j}} + \bar{a}_{j} \tfrac{\partial}{\partial z_{j}} \bigr).
    \end{equation*}

    In particular, the coefficients $ a_{j} $ may be computed explicitly as complex determinants arising from the contraction of $ \Omega $ with $ v_{1}, \dots, v_{n-1} $.
    
\end{remark}

With this definition, we may propose a higher-dimensional generalisation of \cite[Proposition~2.4]{joyce2005u1}.

\begin{proposition}[Extension of an $ (n-1) $-plane to a special Lagrangian plane]\label{PROP:JoyceSLMomentMap}

   Let $ (\mathbb{C}^{n}, g, \Re(\Omega)) $ be equipped with its standard flat metric and calibration. Let $ \vec{x} \in \mathbb{C}^{n} $, and let $ v_{1}, \dots, v_{n-1} \in T_{\vec{x}} \mathbb{C}^{n} $ be linearly independent vectors spanning an isotropic $ (n-1) $-dimensional subspace. Then there exists a unique special Lagrangian $ n $-plane $ P \subset T_{\vec{x}} \mathbb{C}^{n} $ containing $ \langle v_{1}, \dots, v_{n-1} \rangle $, and it is given by

   \begin{equation*}
       P = \langle v_{1}, \dots, v_{n-1}, v_{1} \times \cdots \times v_{n-1} \rangle.
   \end{equation*}
    
\end{proposition}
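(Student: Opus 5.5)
The plan is to reduce everything to a single complex line orthogonal to $V := \langle v_{1}, \dots, v_{n-1} \rangle$, and then pin down a phase inside that line. Since $V$ is isotropic of real dimension $n-1$, the subspace $V \oplus JV$ has real dimension $2n-2$; here $J$ is multiplication by $i$, and the sum is direct and orthogonal because $g(v, Jw) = \pm\omega(v,w) = 0$ for $v, w \in V$. Its $g$-orthogonal complement $E$ is therefore a complex line, say $E = \mathbb{C} e$ with $|e| = 1$.

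First I will show that the Lagrangian $n$-planes containing $V$ are exactly the planes $P_{\theta} := V \oplus \mathbb{R}\, e^{i\theta} e$ for $\theta \in \mathbb{R}/\pi\mathbb{Z}$. Indeed, a vector $u$ with $\omega(u, V) = 0$ satisfies $g(Ju, V) = 0$, so $u \perp JV$. Modding out by $V$ leaves exactly one real direction in $E$. Conversely, each $P_{\theta}$ is isotropic of dimension $n$, because $e \perp JV$ gives $\omega(e^{i\theta}e, V) = 0$.

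Next I consider the complex number $c := \Omega(v_{1}, \dots, v_{n-1}, e)$. I claim $c \neq 0$. The plane $P_{0}$ is Lagrangian, hence of the form $A\cdot\mathbb{R}^{n}$ for some $A \in U(n)$. Its real basis $v_{1}, \dots, v_{n-1}, e$ is thus $\mathbb{C}$-linearly independent, so $\Omega$ does not vanish on it. By complex linearity of $\Omega$ in its last slot,
\begin{equation*}
\Omega(v_{1}, \dots, v_{n-1}, e^{i\theta} e) = e^{i\theta} c .
\end{equation*}
Hence $\Im(\Omega)|_{P_{\theta}} = 0$ if and only if $e^{i\theta} c \in \mathbb{R}$, which determines $\theta$ uniquely modulo $\pi$. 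This gives existence and uniqueness of a special Lagrangian plane containing $V$ via the criterion $\omega|_{L} = \Im(\Omega)|_{L} = 0$ recalled in the Definition above. The choice of orientation is then fixed so that $\Re(\Omega) > 0$.

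It remains to identify this plane with $\langle v_{1}, \dots, v_{n-1}, v_{1} \times \cdots \times v_{n-1}\rangle$. Write $w := v_{1} \times \cdots \times v_{n-1}$, defined by \eqref{EQ:CrossProductOnComplexSpace}.
\begin{itemize}
\item For $u = v_{j}$ we get $g(w, v_{j}) = \Re\,\Omega(v_{1}, \dots, v_{n-1}, v_{j}) = 0$ by alternation.
\item For $u = Jv_{j}$ we get $\Omega(\dots, iv_{j}) = i\,\Omega(\dots, v_{j}) = 0$, so $g(w, Jv_{j}) = 0$.
\end{itemize}
Hence $w \in E$. Pairing with $e$ and $ie$ gives $g(w,e) = \Re c$ and $g(w, ie) = \Re(ic) = -\Im c$, so $w = (\Re c)\, e - (\Im c)\, ie = \bar{c}\, e$. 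In particular $w \neq 0$, and
\begin{equation*}
\Omega(v_{1}, \dots, v_{n-1}, w) = \bar{c}\, c = |c|^{2} > 0 .
\end{equation*}
So $w$ spans exactly the unique admissible direction $e^{i\theta}e$ with $e^{i\theta}c$ real; in fact positive, which matches the calibrated orientation. This proves $P = \langle v_{1}, \dots, v_{n-1}, w\rangle$.

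The step I expect to need the most care is the non-vanishing of $c$, equivalently that $w \neq 0$. That is where linear independence and isotropy are both genuinely used, through the fact that $\Omega$ restricted to any Lagrangian plane is a nonzero unit-modulus multiple of the volume form. Everything else is bookkeeping with the $J$-invariance of $g$ and the complex multilinearity of $\Omega$.
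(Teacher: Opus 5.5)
Your proof is correct and complete. Note that the paper gives no proof of this proposition. It is asserted as the $n$-dimensional analogue of Joyce's Proposition~2.4 for $n=3$ and then invoked in the proof of Theorem~\ref{THM:MainTheorem}. Your argument therefore supplies a justification the paper omits; there is no proof in the paper to match it against.

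The route one would expect from the $n=3$ case is a normal-form argument. Replacing $v_1,\dots,v_{n-1}$ by an orthonormal basis of $V$ only rescales $v_1\times\cdots\times v_{n-1}$ by a nonzero determinant. $SU(n)$ acts transitively on orthonormal isotropic $(n-1)$-frames. The product $\times$ is $SU(n)$-equivariant, because $g$ and $\Re(\Omega)$ are $SU(n)$-invariant. One therefore reduces to $v_j=e_j$, where the special Lagrangian extension is visibly $\mathbb{R}^n$ and $e_1\times\cdots\times e_{n-1}=e_n$.

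That route is shorter once those group-theoretic facts are granted. Yours is self-contained and coordinate-free: it describes all Lagrangian extensions of $V$ as the circle of planes $P_\theta$, and it yields the explicit identity $v_1\times\cdots\times v_{n-1}=\bar c\,e$. This identity is exactly the invariant form of the componentwise recipe $W_{\Phi^j}=\overline{\mathrm{Det}(M^{(j)})}$ that the paper later relies on in Lemma~\ref{LEM:Phi_I_Components}. Your computation $\Omega(v_1,\dots,v_{n-1},w)=|c|^2>0$ also confirms directly that $(v_1,\dots,v_{n-1},w)$ carries the calibrated orientation.

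One small simplification: for $c\neq 0$ you only need $P_0\cap JP_0=0$. This follows from $P_0\perp JP_0$, which is a consequence of $\omega|_{P_0}=0$, so the $U(n)$ normal form of Lagrangian planes is not needed at that step.
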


\section{Set-Up of the Geometric Construction}\label{SECT:SetUpOfGeometricConstruction}

We begin by introducing the symplectic and reduction geometry underlying the construction.

\subsection{The Symplectic Set-Up}\label{SUBSEC:SymplecticSetUp}

Let $ \mathbb{C}^{n} $ be equipped with its standard symplectic form $ \omega = \tfrac{i}{2}\sum_{j=1}^{n} dz_{j} \wedge d\bar{z}_{j} $. For $ (e^{i \theta_{1}}, \dots, e^{i \theta_{n-2}}) \in U(1)^{n-2} $ and $ (z_{1}, \dots, z_{n}) \in \mathbb{C}^{n} $, define the action

\begin{equation}\label{EQ:LieGroupAction}
    (e^{i \theta_{1}}, \dots, e^{i \theta_{n-2}}) \cdot (z_{1}, \dots, z_{n}) = (e^{i \theta_{1}}z_{1}, \dots, e^{i \theta_{n-2}}z_{n-2}, e^{-i(\theta_{1} + \dots + \theta_{n-2})}z_{n-1}, z_{n}).
\end{equation}

This action is Hamiltonian with respect to $ \omega $ and preserves the holomorphic volume form $ \Omega = dz_{1} \wedge \cdots \wedge dz_{n} $.

Associated to the action \eqref{EQ:LieGroupAction} is the standard moment map $ \mu: \mathbb{C}^{n} \rightarrow \mathbb{R}^{n-2} $, given explicitly by

\begin{equation}\label{EQ:MomentMap}
    \mu(z) = \bigl(|z_{1}|^{2} - |z_{n-1}|^{2}, \dots, |z_{n-2}|^{2} - |z_{n-1}|^{2} \bigr).
\end{equation}

Any connected $ U(1)^{n-2} $-invariant Lagrangian submanifold of
$ \mathbb{C}^{n} $ lies entirely within a single level set of the moment map \cite[Proposition 4.1.]{joyce_special_2002}. We therefore introduce coordinates adapted to these level sets.

\subsection{Invariant Coordinates and the Reduced Space}\label{SUBSEC:InvariantCoordsAndTheReducedSpace}

On the open subset $ \{ z_{1} \cdots z_{n-1} \neq 0 \} \subset \mathbb{C}^{n} $, we write that locally

\begin{equation}\label{EQ:PolarRewrite}
    z_{j} = r_{j} e^{i \theta_{j}} 
    \quad (1 \leq j \leq n-1), \qquad z_{n} = x + iu,
\end{equation}

with $ r_{j} > 0 $ and $ \theta_{j} \in \mathbb{R} $.

Fix parameters $ \vec{a} = (a_{1}, \dots, a_{n-1}) \in \mathbb{R}^{n-1} $, which determine a moment map value via

\begin{equation*}
    \mu = (a_{1} - a_{n-1}, \dots, a_{n-2} - a_{n-1}),
\end{equation*}

which is a choice made to preserve permutation symmetry among the coordinates $ z_{1}, \dots, z_{n-1} $. By definition of the moment map, points in $ \mu^{-1}(\vec{a}) $ satisfy

\begin{equation}\label{EQ:LevelSetRelations}
    |z_{j}|^{2} - |z_{n-1}|^{2} = a_{j} - a_{n-1} \qquad (j = 1, \dots, n-2),
\end{equation}

and the level set $ \mu^{-1}(\vec{a}) $ can be written as

\begin{equation}
    \mu^{-1}(\vec{a}) = \{z \in \mathbb{C}^{n}: |z_{j}|^{2} - |z_{n-1}|^{2} = a_{j} - a_{n-1} \}.
\end{equation}

These relations imply that the quantities $ r_{j}^{2} - a_{j} $ coincide for $ j = 1, \dots, n-1 $. We therefore define a real-valued function $ w $ on $ \mu^{-1}(\vec{a}) $, defined by

\begin{equation}\label{EQ:DefinitionOfw}
    w = r_{j}^{2} - a_{j} \quad (j = 1, \dots, n-1).
\end{equation}

The quotient space $ \mu^{-1}(\vec{a}) / U(1)^{n-2} $ is four-dimensional, and so we introduce coordinates $ (x, y, u, v) $ on this reduced space by

\begin{equation}\label{EQ:InvariantCoords}
    v + iy = i^{n-3}z_{1} \cdots z_{n-1}, \qquad x + iu = z_{n}.
\end{equation}

Writing out the product constraint explicitly yields

\begin{equation*}
    i^{3-n} z_{1} \cdots z_{n-1} = i^{3-n} \prod_{j=1}^{n-1} \sqrt{w+a_{j}} e^{i(\theta_{1} + \cdots + \theta_{n-1})},
\end{equation*}

and hence

\begin{equation*}
    P(w) := \prod_{j=1}^{n-1}(w + a_{j}) = v^{2} + y^{2}.
\end{equation*}

For each $ (v,y) \neq (0,0) $, the equation $ P(w)=v^{2}+y^{2} $ admits a unique solution $ w(v,y) $ satisfying $ w \geq -\min(a_{j}) $.
This solution depends smoothly on $ (v,y) $ away from the origin, where the degeneracy of the polynomial becomes relevant and will be addressed later.

\begin{figure}[H]

    \centering
    \includegraphics[width=0.6\linewidth]{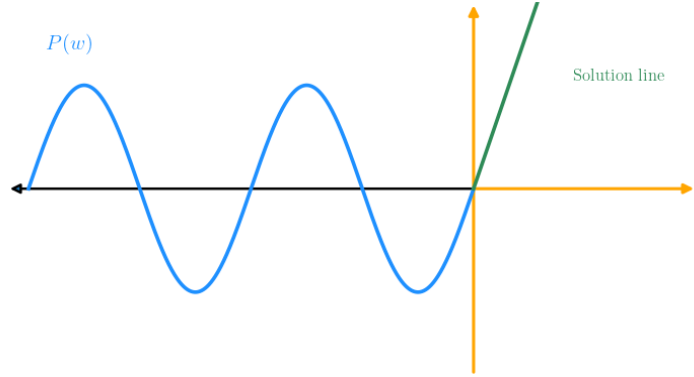}
    
    \caption{Visualisation of the relation $ P(w) = v^{2} + y^{2} $ and the distinguished solution branch.}
    
\end{figure}

\section{Statement of the Main Result}\label{SEC:MainTheorem}

We now formally state the main theorem of this paper.

\begin{theorem}[Construction of $ U(1)^{n-2} $-Invariant Special Lagrangian n-Folds]\label{THM:MainTheorem}

Fix parameters $ \vec{a} = (a_{1}, \dots, a_{n-1}) \in \mathbb{R}^{n-1} $, and let $ S \subset \mathbb{R}^{2} $ be an open domain with coordinates $ (x, y) \in S $.

Let $ P(w) = \prod_{j=1}^{n-1}(w+a_{j}) $.

\begin{itemize}

    \item Suppose that $ \min(a_{j}) $ has multiplicity one. Let $ u, v \in C^{1}(S) $ be real-valued functions. For $ (v, y)\neq (0, 0) \in \mathbb{R}^{2} $ let $ w = w(v, y) $ denote the unique real solution of $ P(w) = v^{2} + y^{2} $, satisfying $ w \geq - \min(a_{j}) $, as in Section~\ref{SUBSEC:InvariantCoordsAndTheReducedSpace}.

    Define a subset $ N \subset \mathbb{C}^{n} $ by

   \begin{align*}
    N=\Bigl\{ z \in \mathbb{C}^{n}:\;
    &|z_{j}|^{2}-|z_{n-1}|^{2}=a_{j}-a_{n-1}\quad (j=1,\dots,n-2),\\
    &i^{\,n-3}z_{1}\cdots z_{n-1}=v(x,y)+iy,\qquad
    z_{n}=x+iu(x,y)\Bigr\}.
    \end{align*}
    
    If $ u $ and $ v $ satisfy the non-linear first-order system

    \begin{equation}\label{EQ:MainEquations}
    \begin{cases}
    u_{x}=v_{y},\\[0.3em]
    v_{x}=-P'(w(v,y))\,u_{y},
    \end{cases}
    \end{equation}

    on $ S $, where $ 
    P'(w)=\sum_{k=1}^{n-1}\prod_{i\neq k}(w+a_{i}) $, then $ N $ is a smooth $ U(1)^{n-2} $-invariant special Lagrangian
$ n $-fold.
Moreover, $ N $ extends as a $ C^{1} $ submanifold up to $ \partial S $
with respect to the above parametrisation.

    \item If $ \min(a_{j}) $ has multiplicity greater than one, assume $ u, v \in C^{0}(S) $ and that \eqref{EQ:MainEquations} holds on $ S \setminus \{(x,y): (v(x,y),y)=(0,0)\} $. Then the subset $ N $ defined above is a $ U(1)^{n-2} $-invariant special
Lagrangian $ n $-fold, which is singular precisely at points where
$ v=y=0 $.
    
\end{itemize}

\end{theorem}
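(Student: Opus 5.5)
We follow Joyce's approach for $n=3$ and construct the tangent space of $N$ explicitly, then check the two conditions of the Harvey--Lawson characterisation. First, $N$ lies in the level set $\mu^{-1}(\vec a)$ and is $U(1)^{n-2}$-invariant by construction. Lemma-type fact \cite[Proposition 4.1.]{joyce_special_2002} says every connected invariant Lagrangian lies in a single level set, and conversely the orbit directions are $\omega$-orthogonal to the whole level set. So the orbit tangent vectors $X_1,\dots,X_{n-2}$ (generators of the action) are isotropic, and they are $\omega$-orthogonal to every vector tangent to $\mu^{-1}(\vec a)$.

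At a point with $z_1\cdots z_{n-1}\neq 0$, the tangent space of $N$ is spanned by $X_1,\dots,X_{n-2}$ together with two vectors $\partial_x,\partial_y$ lifting the graph directions. We choose these lifts $\omega$-orthogonal to the orbits, i.e.\ horizontal for the reduction. Writing $\Pi:\mu^{-1}(\vec a)\to\mathbb{R}^4$, $(x,y,u,v)$, we push $\omega$ down to the reduced symplectic form on the quotient. In the coordinates $v+iy=i^{n-3}z_1\cdots z_{n-1}$ and $x+iu=z_n$ this form is
\[
\omega_{\mathrm{red}} = dx\wedge du + \tfrac{1}{P'(w)}\,dv\wedge dy .
\]
This follows from a computation with $w=r_j^2-a_j$ and $|z_1\cdots z_{n-1}|^2=P(w)$. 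The logarithmic derivative of $v+iy$ gives $\sum_j (dr_j/r_j + i\,d\theta_j)$ with $r_j\,dr_j=\tfrac12 dw$, and $\omega$ restricted to the level set is $\sum_j r_j\,dr_j\wedge d\theta_j + dx\wedge du$. We then compare $\sum_j 1/(w+a_j)=P'/P$.

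Likewise $\Omega$ contracted with $X_1,\dots,X_{n-2}$ should reduce, up to a nonzero constant fixed by the choice of $i^{n-3}$, to $d(v+iy)\wedge d(x+iu)$. The contraction of $dz_1\wedge\cdots\wedge dz_{n-1}$ with the generators gives a multiple of $d(z_1\cdots z_{n-1})$; this is the standard Harvey--Lawson computation.

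With these reductions, we evaluate on the graph, where $du=u_xdx+u_ydy$ and $dv=v_xdx+v_ydy$:
\[
\omega|_N = \Bigl(u_y - \tfrac{v_x}{P'(w)}\Bigr)dx\wedge dy \cdot c,\qquad \Im\bigl((dv+i\,dy)\wedge(dx+i\,du)\bigr) = (v_y-u_x)\,dy\wedge dx\ \text{(up to sign)} .
\]
Here $c$ is a positive factor. Hence $\omega|_N\equiv0$ if and only if $v_x=-P'(w)u_y$, and $\Im\Omega|_N\equiv0$ if and only if $u_x=v_y$. This gives the equivalence with \eqref{EQ:MainEquations}; alternatively one can invoke Proposition~\ref{PROP:JoyceSLMomentMap} with the isotropic plane $\langle X_1,\dots,X_{n-2},\partial_x\rangle$. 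The sign bookkeeping and the phase $i^{n-3}$ are the routine but error-prone part. I would fix them by checking the case $u=$ const, $v=y$-linear.

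Smoothness in the multiplicity-one case runs as follows. $P$ has a simple root at $-\min a_j$, so $P'(w)>0$ on $w\ge -\min a_j$. Hence $w(v,y)$ is smooth everywhere, including $(v,y)=0$, by the implicit function theorem. The map $(x,y,\text{orbit})\mapsto N$ is then an immersion: the orbits are free tori because at most one $r_j$ vanishes, and $w>-a_j$ for the other indices. The points where some $z_j=0$ are handled by continuity of the closed forms $\omega|_N$ and $\Im\Omega|_N$, which vanish on a dense open set. Since the system is then elliptic with smooth coefficients, $C^1$ solutions are smooth by elliptic regularity. The $C^1$ extension to $\partial S$ is inherited from the parametrisation.

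In the higher-multiplicity case, at $v=y=0$ two or more $r_j$ vanish simultaneously. The orbit collapses to a lower-dimensional torus, so $N$ fails to be a manifold there; this gives singularity exactly at $v=y=0$. Away from this set the previous argument applies verbatim, so $N$ is special Lagrangian off a closed set. I expect the main obstacle to be verifying that these points are genuinely singular rather than removable, and making sense of $N$ there with only $C^0$ data. I would try to show that the tangent cone at such a point is a product of a Harvey--Lawson-type cone with a factor, which is not a plane.
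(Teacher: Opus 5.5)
Your argument is correct in structure and takes a genuinely different route from the paper. The paper works pointwise in the gauge $\theta_1=\cdots=\theta_{n-1}$. It writes down $W_{\phi_1},\dots,W_{\phi_{n-2}},W_x,W_y$ and computes the calibrated cross product of the isotropic plane $\langle W_{\phi_1},\dots,W_{\phi_{n-2}},W_x\rangle$ by cofactor expansion (Lemma~\ref{LEM:Phi_I_Components}). It then uses Proposition~\ref{PROP:JoyceSLMomentMap} to solve $W_y=\sum\alpha_iW_{\phi_i}+\beta W_x+\gamma W_\Phi$ componentwise, which forces $\beta=0$, $\gamma=1/P'(w)$ and the two equations. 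You instead push $\omega$ and $\Omega$ down to the four-dimensional quotient and restrict to the graph surface. What your route buys:
\begin{itemize}
\item $P'(w)$ enters only through $\omega_{\mathrm{red}}=dx\wedge du+P'(w)^{-1}dv\wedge dy$, and this formula is right.
\item Each equation comes from exactly one of the two conditions.
\item Since $(x,y,u,v)$ remain smooth coordinates on the quotient where a single $z_j$ vanishes, it also covers the points $(v,y)=(0,0)$ in the multiplicity-one case, where the paper's formulas for $\theta_x,\theta_y$ break down.
\end{itemize}
The paper's proof in fact treats only generic points; it does not address smoothness there, the $\partial S$ claim, or the singular case. The paper's route needs no knowledge of the reduced forms and mirrors Joyce's argument directly, at the cost of heavier componentwise algebra.

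Two displayed computations in your key step are wrong as written, although the conclusions you draw are right.
\begin{itemize}
\item On the graph, $\omega_{\mathrm{red}}$ restricts to $\bigl(u_y+v_x/P'(w)\bigr)\,dx\wedge dy$, not $u_y-v_x/P'(w)$.
\item The constant in the reduction of $\Omega$ is not harmless. In the coordinates $\zeta_j=\log z_j$ one finds $\Omega(X_1,\dots,X_{n-2},\cdot,\cdot)=i^{n-2}\,d(z_1\cdots z_{n-1})\wedge dz_n=i\,d(v+iy)\wedge d(x+iu)$. The normalisation $i^{n-3}$ is chosen exactly so that this constant is $i$ for every $n$. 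Hence $\Im\Omega|_N$ is governed by $\Re\bigl((dv+i\,dy)\wedge(dx+i\,du)\bigr)=(u_x-v_y)\,dx\wedge dy$.
\item The imaginary part you wrote is actually $(u_yv_x-u_xv_y-1)\,dx\wedge dy$, and using it would give a wrong equation.
\end{itemize}
Your proposed test catches this: $u\equiv c$, $v\equiv 0$ solves the system but makes that imaginary part equal to $-dx\wedge dy$. With the correct constant you also get, on the lifts of $\partial_x,\partial_y$, $\Re\Omega=1+v_y^2+P'(w)u_y^2>0$ on solutions.

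In the multiplicity-$\geq 2$ case, ``the orbit collapses, so $N$ fails to be a manifold'' is not an argument: the rotation action on $\mathbb{C}$ collapses at $0$. The paper only asserts this too. Tangent cones would need mass bounds you do not have from $C^0$ data; an equivariant argument avoids them.
\begin{itemize}
\item Suppose $N$ were $C^1$ at such a point $p$. Then $T_pN$ is Lagrangian and invariant under the stabiliser $T_p\cong U(1)^{m-1}$, where $m$ is the multiplicity. $T_p$ acts on the $m$ vanishing coordinates through the subtorus $\{\sum\phi_j=0\}\subset U(1)^m$.
\item For $m\geq 3$ these characters are nonzero and pairwise distinct up to sign. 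So any invariant subspace meeting them contains a complex line, which a Lagrangian plane cannot. Thus $T_p$ acts trivially on $T_pN$, and via the $T_p$-equivariant orthogonal projection $N\to p+T_pN$ it fixes a neighbourhood of $p$ in $N$. This contradicts freeness of the action wherever $(v,y)\neq(0,0)$.
\item For $m=2$, $T_pN$ must contain an invariant real $2$-plane in the two vanishing coordinates. A slice argument then shows that $N/U(1)^{n-2}$ is locally a closed half-plane at $[p]$. This contradicts the fact that $(x,y)$ identifies $N/U(1)^{n-2}$ homeomorphically with the open set $S$.
\end{itemize}
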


The proof of Theorem~\ref{THM:MainTheorem} is given in the subsequent section.

\section{Proof of the Main Theorem}\label{SECT:ProofOfTheMainTheorem}

We now prove Theorem~\ref{THM:MainTheorem}, introducing first the necessary objects.

Fix a non-singular point $ \vec{z} \in N $. To determine when $ T_{\vec{z}}N $ is special Lagrangian, we compute an explicit basis of tangent vectors spanning $ T_{\vec{z}}N $, considering vectors tangent to the $ U(1)^{n-2} $-orbits and those transverse to them, and evaluate the calibrated cross product on this basis.

Since the special Lagrangian condition is invariant under the $ U(1)^{n-2} $-action, we may, without loss of generality, choose a representative of the orbit of $ \vec{z} $ for which

\begin{equation}
    \theta_1=\cdots=\theta_{n-1}.
\end{equation}

This gauge choice significantly simplifies the ensuing computations.

\subsection{Tangent Vectors Along the Fibres}\label{SUBSECT:TangentVectorsAlongFibres}

For each $ i = 1, \dots, n-2 $, consider the one-parameter subgroup of the $ U(1)^{n-2} $-action \eqref{EQ:LieGroupAction}, given by

\begin{equation*}
    \gamma_{i}(\theta) = (z_{1}, \dots, e^{i\theta} z_{i}, \dots, z_{n-2},
   e^{-i\theta} z_{n-1}, z_{n}).
\end{equation*}

Differentiating at $ \theta = 0 $ yields tangent vectors along the group orbits,

\begin{equation}
    W_{\phi_i}
:= \left.\frac{d}{d\theta}\right|_{\theta=0} \gamma_i(\theta)
= (0, \dots, i z_i, \dots, 0, -i z_{n-1}, 0),
\end{equation}

which together span the tangent space to the $ U(1)^{n-2} $-orbit through $ \vec{z} $.

\subsection{Tangent Vectors Transverse to the Group Orbits}

Since the construction and special Lagrangian condition are $ U(1)^{n-2} $-invariant, we may choose a representative in each orbit with $ \theta_{1} = \cdots = \theta_{n-1} $.

Let $ W:U \subset \mathbb{R}^{2} \to \mathbb{C}^{n} $ denote the position vector at a point in the submanifold, given by

\begin{equation}\label{EQ:PositionVector}
     W(x, y) = \bigl( \sqrt{w(x, y) + a_{1}} \; e^{i \theta(x, y)}, \dots, \sqrt{w(x, y) + a_{n-1}}\; e^{i \theta(x, y)}, x+iu(x, y) \bigr),
\end{equation}

where $ w=w(v(x,y),y) $ and $ \theta=\theta(v(x,y),y) $.

Differentiating with respect to $ x $ and $ y $ yields the transverse tangent vectors

\begin{equation}
     W_{x} = \bigl( \frac{w_{x} e^{i \theta}}{2 \sqrt{w + a_{1}}} + i \theta_{x} \sqrt{w + a_{1}} e^{i \theta}, ..., \frac{w_{x}e^{i \theta}}{2 \sqrt{w + a_{n-1}}} + i \theta_{x} \sqrt{w + a_{n-1}} e^{i \theta}, 1 + iu_{x} \bigr),
\end{equation}

\begin{equation}
    W_{y} = \bigl( \frac{w_{y}e^{i \theta}}{2 \sqrt{w + a_{1}}} + i \theta_{y} \sqrt{w + a_{1}}e^{i \theta}, ..., \frac{w_{y}e^{i \theta}}{2 \sqrt{w + a_{n-1}}} + i \theta_{y} \sqrt{w + a_{n-1}}e^{i \theta}, iu_{y} \bigr).
\end{equation}

The derivatives of $ w $ and $ \theta $ with respect to $ (x, y) $ are obtained by implicit differentiation of the defining relations

\begin{equation}\label{EQ:ProductFormOne}
    e^{i(n-1) \theta} = i^{3-n} \frac{v+iy}{\sqrt{v^{2} + y^{2}}},
\end{equation}

\begin{equation}\label{EQ:ProductFormTwo}
    \prod_{j = 1}^{n-1} (w+a_{j}) = v^{2} + y^{2},
\end{equation}

which hold away from points where $ (v, y) = (0, 0) $. This yields

\begin{align}
\theta_{x} &= - \frac{v_{x} \, y}{(n-1)(v^{2}+y^{2})}, &
\theta_{y} &= \frac{v-yv_{y}}{(n-1)(v^{2}+y^{2})}, \label{eq:theta-derivs}\\
w_{x} &= \frac{2vv_{x}}{(v^{2}+y^{2})\, \sum_{k=1}^{n-1}\frac{1}{w+a_{k}}}, &
w_{y} &= \frac{2(vv_{y}+y)}{(v^{2}+y^{2})\, \sum_{k=1}^{n-1}\frac{1}{w+a_{k}}}.
\end{align}

\begin{proof}

    For the transverse vectors, differentiate \eqref{EQ:PositionVector} with respect to $ x $ and $ y $, treating $ w $ and $ \theta $ as implicit functions of $ (x, y) $. Differentiation of \eqref{EQ:ProductFormTwo} with respect to $ x $ gives

    \begin{equation*}
        P'(w)w_{x} = 2vv_{x},
    \end{equation*}

    where the formula for $ w_{x} $ then follows by division by $ P'(w) $. The formula for $ w_{y} $ follows similarly, with the additional term $ 2y $ arising from $ \partial_{y}(y^{2}) $. For the gauge derivatives, for $ \theta_{x} $ we use \eqref{EQ:ProductFormOne} and differentiate with respect to $ x $, which gives 

    \begin{equation*}
        i(n-1)\theta_{x}e^{i(n-1)\theta} = i^{3-n} \bigl( \frac{v_{x}}{\sqrt{v^{2} + y^{2}}} - \frac{(v+iy)vv_{x}}{(v^{2}+y^{2})^{\frac{3}{2}}} \bigr).
    \end{equation*}

    Substituting \eqref{EQ:ProductFormOne} back in, and dividing both sides,

    \begin{equation*}
        i(n-1)\theta_{x} = \frac{-v_{x}y}{(n-1)(v^{2}+y^{2})}.
    \end{equation*}

    The formula for $ \theta_{x} $ follows by division by $ i(n-1) $. The derivation of $ \theta_{y} $ follows similarly.
    
\end{proof}

\begin{remark}\label{REMARK:Isotropic}

By construction, $ W_{\phi_{1}}, \dots, W_{\phi_{n-2}}, W_{x} $ are linearly independent, and $ \omega(W_{\phi_{i}}, W_{\phi_{j}}) = 0 $ for $ i \neq j, \quad i, j = 1, \dots, n-2 $, and $ \omega(W_{\phi_{i}}, W_{x}) = 0 $ for $ i = 1, \dots, n-2 $. This gives us that $ \langle W_{\phi_{1}}, \dots, W_{\phi_{n-2}}, W_{x}, W_{\Phi}(W_{\phi_{1}}, \dots, W_{\phi_{n-2}}, W_{x}) \rangle $ is isotropic.

\end{remark}

\subsection{Calibrated Cross Product}

Let $ e_{1}, \dots, e_{n} $ be the standard complex basis of $ \mathbb{C}^{n} $. For each $ j = 1, \dots, n $, define $ M^{(j)} $ to be the $ n \times n $ complex matrix whose columns are $ W_{\phi_{1}}, \dots, W_{\phi_{n-2}}, W_{x}, e_{j} $. Recall from Section~\ref{SECT:BackgroundOnSLManifolds} that the calibrated cross product $ W_{\Phi} $ is defined componentwise by 

\begin{equation*}
    W_{\Phi^{j}} := \overline{\mathrm{Det}(M^{(j)})},
\end{equation*}

where $ \mathrm{Det} $ is the complex determinant.

\begin{lemma}{Components of $ W_{\Phi} $}\label{LEM:Phi_I_Components}

Set $ A_{k} := \sqrt{w + a_{k}} $ for $ k = 1, \dots, n-1 $. Then the components of $ W_{\Phi}\bigl(W_{\phi_{1}},\dots,W_{\phi_{n-2}},W_{x}\bigr) $ are given as follows.

For $ 1 \leq i \leq n-1 $,

\begin{equation}
    (W_{\Phi})^{i} =
-(-i)^{n-1}e^{-i(n-2)\theta}
\left(\prod_{\substack{k=1\\ k\neq i}}^{n-1}\sqrt{w+a_{k}}\right)
(u_{x}+i)
\end{equation}

For $ i = n $,

\begin{equation}
  (W_{\Phi})^{n} = -iv_{x}.
\end{equation}

\end{lemma}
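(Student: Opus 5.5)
The plan is to compute each $\mathrm{Det}(M^{(j)})$ directly by cofactor expansion along the last column $e_{j}$, and then conjugate. We have $\mathrm{Det}(M^{(j)}) = (-1)^{j+n}\det \widehat{M}^{(j)}$, where $\widehat{M}^{(j)}$ is the $(n-1)\times(n-1)$ matrix obtained by deleting row $j$ from the $n\times(n-1)$ matrix with columns $W_{\phi_{1}},\dots,W_{\phi_{n-2}},W_{x}$. With the gauge $\theta_{1}=\cdots=\theta_{n-1}=\theta$, write $z_{k}=A_{k}e^{i\theta}$. Each column $W_{\phi_{m}}$ has exactly two nonzero entries, $iA_{m}e^{i\theta}$ in row $m$ and $-iA_{n-1}e^{i\theta}$ in row $n-1$. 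It is convenient to factor $ie^{i\theta}$ out of each of the $n-2$ orbit columns. This produces the prefactor $(ie^{i\theta})^{n-2}$, which after conjugation gives the $e^{-i(n-2)\theta}$ and the power of $-i$ in the claimed formula.

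The case $j=n$ is the simplest. Deleting the last row, the $n-2$ orbit columns together with the first $n-1$ entries of $W_{x}$ form an arrow-type matrix: diagonal entries $A_{1},\dots,A_{n-2}$, a last row $(-A_{n-1},\dots,-A_{n-1})$, and last column $(W_{x})^{k}=e^{i\theta}\bigl(\tfrac{w_{x}}{2A_{k}}+i\theta_{x}A_{k}\bigr)$. I will eliminate the last row using the diagonal, which gives
\[
\det = \Bigl(\prod_{k=1}^{n-1}A_{k}\Bigr) e^{i\theta}\sum_{k=1}^{n-1}\Bigl(\frac{w_{x}}{2A_{k}^{2}}+i\theta_{x}\Bigr).
\]
Next substitute the derivative formulas \eqref{eq:theta-derivs} together with $P'(w)=P(w)\sum_{k}1/(w+a_{k})$ and $P(w)=v^{2}+y^{2}$. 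The sum then collapses to $\frac{(v-iy)v_{x}}{v^{2}+y^{2}}$. Using $\prod A_{k}=\sqrt{v^{2}+y^{2}}$ and relation \eqref{EQ:ProductFormOne}, which expresses $e^{i(n-1)\theta}$ through $(v+iy)/\sqrt{v^{2}+y^{2}}$, all moduli and phases cancel, leaving a multiple of $v_{x}$. Tracking the factor $i^{n-2}$ against $i^{3-n}$ should yield exactly $-iv_{x}$ after conjugation.

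For $1\le j\le n-1$, deleting row $j$ keeps the row $n$, whose only nonzero entry is $1+iu_{x}$, in the $W_{x}$ column. I will expand along that row. The remaining $(n-2)\times(n-2)$ block consists of the orbit columns restricted to rows $\{1,\dots,n-1\}\setminus\{j\}$. For $j=n-1$ this block is diagonal with determinant $\prod_{k\ne n-1}A_{k}$. For $j\le n-2$, the column $W_{\phi_{j}}$ has only the entry $-A_{n-1}$ in row $n-1$, and expanding along it again gives $\pm\prod_{k\ne j}A_{k}$. Finally I conjugate, use $\overline{1+iu_{x}}=1-iu_{x}=-i(u_{x}+i)$, and combine this with $\overline{(ie^{i\theta})^{n-2}}$. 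The answer should come out as $-(-i)^{n-1}e^{-i(n-2)\theta}\prod_{k\ne i}A_{k}(u_{x}+i)$.

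The main obstacle is sign bookkeeping. The cofactor signs $(-1)^{j+n}$, the signs from the permutation expansion in the $j\le n-2$ case, and the powers of $i$ must combine to a $j$-independent overall sign. I will check this first for $n=3$ against Joyce's formula, and then verify the general sign by tracking the parity of the permutation that moves row $n-1$ into position $j$.
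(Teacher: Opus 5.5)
Your proposal is correct and follows the paper's own argument: cofactor expansion along the $e_{j}$ column and then the $z_{n}$-row to reach the orbit block (you expand along the column $W_{\phi_{j}}$ where the paper uses the $z_{n-1}$-row, which is equivalent), and for $j=n$ the same arrow-type determinant simplified with the implicit-differentiation formulas, giving $\mathrm{Det}(M^{(n)}) = i^{n-2}i^{3-n}v_{x} = iv_{x}$. The sign bookkeeping you flag does close up uniformly: the cofactor sign $(-1)^{j+n}$ cancels against the cofactor sign $(-1)^{n+j}$ of the entry $-A_{n-1}$, leaving only that entry's minus sign (for $j=n-1$ the block is diagonal and $(-1)^{2n-1}=-1$ directly), so $\mathrm{Det}(M^{(j)}) = -(1+iu_{x})(ie^{i\theta})^{n-2}\prod_{k\neq j}A_{k}$ for every $1\le j\le n-1$, and its conjugate is exactly the stated formula.
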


\begin{proof}

    Fix $ 1 \leq i \leq n-1 $, and let $ M^{(j)} $ denote the $ n \times n $ matrix, whose columns are $ W_{\phi_{1}}, \dots, W_{\phi_{n-2}}, W_{x}, e_{i} $. Expanding the determinant along the final column yields

    \begin{equation*}
        \mathrm{Det}(M^{(j)}) = (-1)^{j + n} \mathrm{Det}(\widetilde{M}^{(j)}),
    \end{equation*}

    where $ \widetilde{M}^{(j)} $ is the $ (n-1) \times (n-1) $ matrix obtained by deleting the $ i $-th row, and the final column.

    The row corresponding to the $ z_{n} $ coordinate contains a single non-zero entry of $ 1 + iu_{x} $ in the $ W_{x} $-column. Expanding along the row gives

    \begin{equation*}
        \mathrm{Det}(\widetilde{M}^{(i)}) = i^{n-3}(1 + iu_{x}) \mathrm{Det}(B^{(i)}),
    \end{equation*}

    where $ B^{(i)} $ is the $ (n-2) \times (n-2) $-matrix obtained by deleting the $ z_{n} $-row and the $ W_{x} $-column.

    The matrix $ B^{(i)} $ is made up of rows $ z_{1}, \dots, \hat{z_{i}},  \dots, z_{n-1} $, and columns $ W_{\phi_{1}}, \dots, W_{\phi_{n-2}} $. Each column $ W_{\phi_{r}} $ has precisely two non-zero entries of $ (W_{\phi_{r}})_{z_{r}} = iA_{r}e^{i \theta} $ and $ (W_{\phi_{r}})_{z_{n-1}} = -iA_{n-1}e^{i \theta} $.

    If $ i \neq n-1 $, the $ z_{n-1} $-row of $ B^{(i)} $ is non-zero. Expanding along this row, all cofactors vanish except the one corresponding to the column $ W_{\phi_{i}} $. The resulting minor is diagonal, with diagonal entries $ iA_{r} e^{i \theta} $ for $ r \in \{1,\dots,n-2\} \setminus \{i\} $. Hence,

    \begin{equation*}
        \mathrm{Det}(B^{(i)}) = i^{\,2n-5}e^{i(n-2)\theta} \prod_{\substack{k=1\\ k\neq i}}^{n-1}A_{k}.
    \end{equation*}

    Rewriting the $ i $-power and taking the complex conjugate yields the required equation.

The case $ i=n-1 $ is analogous: in this case $ B^{(n-1)} $ is diagonal with entries
$ iA_{r} e^{i\theta} $ for  $r=1,\dots,n-2 $, yielding the same expression.

For $ i = n $, we use cofactor expansion along the final column, to give

\begin{equation*}
    \det(M^{(n)}) = (-1)^{n+n}\det(\widetilde{M}^{(n)}) = \det(\widetilde{M}^{(n)}),
\end{equation*}

where $ \widetilde{M}^{(n)} $ is the $ (n-1) \times (n-1) $ matrix obtained by deleting the $ n $-th row and the final column. This differs from the previous computations as it now deletes the $ 1+iu_{x} $ term. The upper-left block is then diagonal, and so with one more iteration of the cofactor expansion gives 

\begin{equation*}
    \overline{(W_{\Phi})^{n}} = i^{n-2}i^{3-n}(v+iy)\bigl( \frac{w_{x}}{2} \sum_{k=1}^{n-1} \frac{1}{2+a_{k}} + i(n-1)\theta_{x} \bigr)
\end{equation*}

This expression can be simplified by noting that

\begin{equation*}
    \frac{w_{x}}{2} \sum_{k} \frac{1}{w+a_{k}} + i(n-1)\theta_{x} = \frac{v_{x}}{v^{2}+y^{2}},
\end{equation*}

and so

\begin{equation*}
    \overline{(W_{\Phi})^{n}} = iv_{x}.
\end{equation*}

Thus, taking the complex conjugate gives the claimed form for $ (W_{\Phi})^{(n)} $.

\end{proof}

\subsection{Proof of Theorem~\ref{THM:MainTheorem}}

We now have the necessary components to proceed with the proof of Theorem~\ref{THM:MainTheorem}.

\begin{proof}[Proof of Theorem~\ref{THM:MainTheorem}]

Let $ S \subset \mathbb{R}^{2} $ be a domain with coordinates $ (x,y) $, and let
$ u,v : S \to \mathbb{R} $ be $ C^{1} $-functions. At a non-singular point, the tangent space of $ N $ is spanned by

\begin{equation*}
    W_{\phi_{1}}, \dots, W_{\phi_{n-2}}, W_{x}, W_{y},
\end{equation*}

where the vectors $ W_{\phi_{i}} $ span the $ U(1)^{n-2} $-orbit and
$ W_{x}, W_{y} $ are transverse directions. By Remark~\ref{REMARK:Isotropic}, the $ (n-1) $-plane $ \langle W_{\phi_{1}}, \dots, W_{\phi_{n-2}}, W_x \rangle $
is isotropic.  Denote by $ W_{\Phi} $ its calibrated cross-product vector, as defined in Section~\ref{SECT:BackgroundOnSLManifolds}.

By Proposition~\ref{PROP:JoyceSLMomentMap}, the $ n $-plane $ \langle W_{\phi_{1}}, \dots, W_{\phi_{n-2}}, W_{x}, W_{y} \rangle $ is special Lagrangian if and only if $ W_{y} $ lies in the span of $ \langle W_{\phi_{1}}, \dots, W_{\phi_{n-2}}, W_{x}, W_{\bar{\Phi}} \rangle $. Equivalently, there exist real-numbers $ \alpha_{i}, \beta, \gamma $ such that

\begin{equation}\label{EQ:DecompOfW_Y}
    W_{y} = \sum_{i = 1}^{n-2} \alpha_{i} W_{\phi_{i}} + \beta W_{x} + \gamma W_{\bar{\Phi}}.
\end{equation}

To begin with, consider the $ n $-th component of \eqref{EQ:DecompOfW_Y}. Since the $ W_{\phi_{i}} $ have zero $ z_{n} $-components, we have that

\begin{equation}\label{EQ:nThComponents}
    iu_{y} = \beta(1+iu_{x}) + \gamma(-iv_{x}).
\end{equation}

Taking the real parts of both sides of \eqref{EQ:nThComponents}, we determine that $ \beta = 0 $. Considering the imaginary parts of \eqref{EQ:nThComponents}, one has

\begin{equation}\label{EQ:uyvx}
    u_{y} = -\gamma v_{x}.
\end{equation}

The $ i $-th component of \eqref{EQ:DecompOfW_Y}, for $ 1 \leq i \leq n-2 $, is

\begin{equation}\label{EQ:iTHComponentOne}
    \frac{w_{y}e^{i \theta}}{2\sqrt{w+a_{i}}} + i\theta_{y}\sqrt{w+a_{i}}e^{i \theta} = i\alpha_{i}\sqrt{w+a_{i}}e^{i \theta} + \gamma \bigl( -(-i)^{n-1} \bar{z}_{1} \cdots \hat{\bar{z}_{i}} \cdots \bar{z}_{n-1}(u_{x}+i) \bigr).
\end{equation}

Multiply both sides of \eqref{EQ:iTHComponentOne} by $ \sqrt{w+a_{i}} e^{-i\theta} $,

\begin{equation}\label{EQ:iTHComponentTwo}
    \frac{w_{y}}{2} + i\theta_{y}(w+a_{i})= i\alpha_{i}(w+a_{i}) + \gamma \bigl( -(-i)^{n-1} \bar{z}_{1} \cdots \bar{z}_{n-1}(u_{x}+i) \bigr).
\end{equation}

Since $ \bar{z}_{1} \cdots \bar{z}_{n-1} = (-i)^{3-n}(v-iy) $, we get that

\begin{equation}\label{EQ:iTHComponentThree}
    \frac{w_{y}}{2} + i\theta_{y}(w+a_{i})= i\alpha_{i}(w+a_{i}) + \gamma (v-iy)(u_{x}+i).
\end{equation}

Taking the real parts of both sides, 

\begin{equation}\label{EQ:iTHComponentRealParts}
   \frac{vv_{y}+y}{P'(w)} = \gamma(vu_{x}+y).
\end{equation}

Taking the imaginary parts of both sides,

\begin{equation}\label{EQ:iTHComponentImaginaryParts}
    \alpha_{i} = \theta_{y} - \frac{\gamma(v-yu_{x})}{w+a_{i}}
\end{equation}

Now looking at the $ (n-1) $-st component of \eqref{EQ:DecompOfW_Y}, which differs from \eqref{EQ:iTHComponentOne} in that now $ z_{n-1} $ contributes across all $ W_{\phi_{i}} $, we have that

\begin{equation}\label{EQ:n-1STComponent}
    \frac{w_{y}}{2} + i\theta_{y}(w+a_{n-1}) = -i \bigl( \sum_{k=1}^{n-2} \alpha_{k} \bigr)(w+a_{n-1}) + \gamma(v-iy)(u_{x}+i).
\end{equation}

Substituting in \eqref{EQ:iTHComponentImaginaryParts} to \eqref{EQ:n-1STComponent},

\begin{equation*}
    (n-1)\theta_{y} = \gamma(v-yu_{x})\sum_{k=1}^{n-1} \frac{1}{w+a_{k}}.
\end{equation*}

Then,

\begin{equation*}
    v-yv_{y} = \gamma P'(w)(v-yu_{x})
\end{equation*}

Together with the previous \eqref{EQ:iTHComponentRealParts}, we have, fundamentally, a linear algebra problem. The two equations in consideration are $ vv_{y}+y = \gamma P'(w)(vu_{x}+y) $ and $ v-yv_{y} = \gamma P'(w)(v-yu_{x}) $. Therefore, $ v(v_{y} - \gamma P'(w)u_{x}) + y(1-\gamma P'(w)) = 0 $ and $ -y(v_{y} - \gamma P'(w)u_{x}) + v(1- \gamma P'(w)) = 0 $. Solving this determines that

\begin{equation}
    \gamma = \frac{1}{P'(w)}, u_{x} = v_{y}.
\end{equation}

Substituting this expression for $ \gamma $ back into \eqref{EQ:nThComponents} determines the PDE system:

\begin{equation*}
    u_{x} = v_{y}, \qquad v_{x} = -P'(w) u_{y}.
\end{equation*}

Conversely, suppose that $ u_{x} = v_{y} $ and $ v_{x} = -P'(w)u_{y} $. Set

\begin{equation*}
    \beta =  0, \quad \gamma = \frac{1}{P'(w)}, \quad \alpha_{i} = \theta_{y} - \frac{v-yu_{x}}{P'(w)(w+a_{i})}, \quad 1 \leq i \leq n-2.
\end{equation*}

The the component equations are satisfied, and hence \eqref{EQ:DecompOfW_Y} holds. Therefore, the tangent $ n $-plane is special Lagrangian if and only if $ u_{x} = v_{y} $ and $ v_{x} = - P'(w)u_{y} $.

\end{proof}

\section{Recovery of Joyce's $ n = 3 $ Construction}\label{SECT:RecoveryOfJoycesEQS}

We begin by comparing Theorem \ref{THM:MainTheorem} with the $ U(1) $-invariant special Lagrangian $ 3 $-folds studied by Joyce \cite{joyce2005u1, joyce2005u2, joyce2005u3}, to prove that we can recover the equations used. Setting $ n = 3 $ in Theorem~\ref{THM:MainTheorem}, the polynomial 

\begin{equation*}
    P(w) = (w+a_{1})(w+a_{2})
\end{equation*}

satisfies

\begin{equation*}
    P'(w) = (w+a_{1}) + (w + a_{2}),
\end{equation*}

and hence

\begin{equation*}
    \gamma = - \frac{1}{(w + a_{1}) + (w + a_{2})}.
\end{equation*}

The moment map level is given by $ \mu = a_{1} - a_{2} $. Choosing the normalisation
$ a_{1} + a_{2} = 0 $, and writing $ a = a_{1} = -a_{2} $, the reduced equation $ P(w)=v^{2} + y^{2} $ becomes

\begin{equation*}
    w = \sqrt{v^{2} + y^{2} + a^{2}}.
\end{equation*}

Substituting into the system of Theorem~\ref{THM:MainTheorem} yields

\begin{equation}\label{EQ:JoyceU1System}
u_{x} = v_{y}, \qquad
v_{x} = -2\sqrt{v^{2} + y^{2} + a^{2}}\,u_{y}.
\end{equation}

This is precisely the non-linear Cauchy-Riemann system
studied by Joyce in the $ U(1) $-invariant case.

\section{Examples}\label{SECT:Examples}

\subsection{Harvey-Lawson $ U(1)^{n-1} $-Invariant Solutions and Translations in $ z_{n} $}

The construction of $ U(1)^{n-1} $-invariant special Lagrangian $ n $-folds in $ \mathbb{C}^{n} $ by \cite{harvey_calibrated_1982} provides a family of examples.

\begin{theorem}

    Let $ M_{b, c} $ denote the locus in $ \mathbb{C}^{n} $ of the equations

    \begin{equation}\label{EQ:HarveyLawsonMomentMaps}
        |z_{j}|^{2} - |z_{n}|^{2} = b_{j}, \quad j = 1, \dots, n-1.
    \end{equation}

    and
    \begin{equation}\label{EQ:HarveyLawsonRealImaginaryEquation}
        \begin{cases}
            \mathrm{Re}(z_{1} \cdots z_{n}) = c, \quad \text{if } n \text{ is even}, \\
            \mathrm{Im}(z_{1} \cdots z_{n}) = c, \quad \text{if } n \text{ is odd}.
        \end{cases}
    \end{equation}

    The smooth locus of $ M_{c} $ is special Lagrangian. When the defining equations have full rank, $ M_{c} $ is a smooth special Lagrangian submanifold.
     
\end{theorem}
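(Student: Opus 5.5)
We check the two special Lagrangian conditions directly. At a smooth point $ z \in M_{b,c} $, the tangent space is the common kernel of the differentials of the $ n $ defining functions. We show $ \omega|_{T_zM} = 0 $ and $ \Im(\Omega)|_{T_zM} = 0 $; the Harvey--Lawson criterion recalled in Section~\ref{SECT:BackgroundOnSLManifolds} then gives that $ M_{b,c} $ is special Lagrangian.

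Write $ f_{j} = |z_{j}|^{2} - |z_{n}|^{2} $. Each $ f_j $ is a Hamiltonian, up to a constant factor, for the circle action $ \xi_{j} $ that rotates $ z_{j} $ by $ e^{i\theta} $ and $ z_{n} $ by $ e^{-i\theta} $. These are the $ U(1)^{n-1} $ analogues of the vector fields $ W_{\phi_{i}} $ from Section~\ref{SUBSECT:TangentVectorsAlongFibres}.

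\begin{itemize}
\item \textbf{Commutation.} The actions commute, so $ \{f_{j}, f_{k}\} = 0 $.
\item \textbf{Invariance of the product.} Each $ \xi_{j} $ preserves $ z_{1}\cdots z_{n} $, so $ \{f_{j}, \Re(i^{\epsilon}z_{1}\cdots z_{n})\} = 0 $. Here $ \epsilon \in \{0,1\} $ is chosen according to parity, so that the relevant function is $ g := \Re(z_1\cdots z_n) $ or $ \Im(z_1\cdots z_n) $.
\item \textbf{Involutivity.} The $ n $ functions $ f_{1}, \dots, f_{n-1}, g $ therefore Poisson-commute pairwise.
\item \textbf{Lagrangian.} Where their differentials are independent, the common level set is Lagrangian. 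Its tangent space is the $ \omega $-orthogonal complement of the span of the Hamiltonian vector fields $ X_{f_j}, X_g $. These vector fields are themselves tangent to the level set and span an isotropic $ n $-plane, so that plane is exactly $ T_zM $ and it is Lagrangian.
\end{itemize}

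It remains to check $ \Im(\Omega)|_{T_zM} = 0 $. The tangent space is spanned by $ \xi_{1}, \dots, \xi_{n-1} $ together with $ X_{g} $. Evaluate $ \Omega(\xi_{1}, \dots, \xi_{n-1}, X_{g}) $ by the same cofactor expansion used in Lemma~\ref{LEM:Phi_I_Components}.

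\begin{itemize}
\item \textbf{Contraction with the orbit directions.} One computes $ \iota_{\xi_{n-1}}\cdots\iota_{\xi_{1}}\Omega = c_n\, i^{n-1}\,d(z_{1}\cdots z_{n}) $ for a real combinatorial constant $ c_n $. This follows by expanding the determinant with columns $ iz_je_j - iz_ne_n $: each term drops one $ z_k $, and the terms reassemble into $ \sum_k \prod_{l\ne k} z_l\, dz_k $.
\item \textbf{Reduction to a single pairing.} Hence $ \Omega(\xi_{1}, \dots, \xi_{n-1}, X_g) $ is a fixed power of $ i $ times $ d(z_1\cdots z_n)(X_g) $.
\item \textbf{Writing $ X_g $ via the gradient.} If $ g = \Re(\lambda h) $ with $ h = z_1\cdots z_n $ holomorphic and $ |\lambda| = 1 $, then $ X_g = J\nabla g $. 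This gives $ dh(X_g) = i\,dh(\nabla g) = i\bar\lambda|dh|^2 $, up to a positive convention constant.
\item \textbf{Parity bookkeeping.} The phase of $ \Omega(\xi, X_g) $ is therefore $ i^{n}\bar\lambda $ up to sign. The parity-dependent choice of $ \Re $ versus $ \Im $ in \eqref{EQ:HarveyLawsonRealImaginaryEquation} (i.e. $ \lambda = 1 $ or $ -i $) is exactly what makes this phase real. This step needs care with the sign conventions for $ \omega $ and for $ X_g $.
\end{itemize}

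If this phase bookkeeping with $ \lambda $ turns out to be off, we instead use the conclusion of the previous step as a consistency check: compute the phase at one explicit point, say with all $ z_j $ real and positive, and propagate it by continuity and connectedness of the orbit structure.

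Given $ \Omega(\xi_1,\dots,\xi_{n-1},X_g) \in \mathbb{R} $, we get $ \Im(\Omega)|_{T_zM} = 0 $. Together with $ \omega|_{T_zM} = 0 $, this shows every smooth point is special Lagrangian.

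Finally, full rank of the defining equations means the $ n $ differentials $ df_j, dg $ are independent at every point of $ M_{b,c} $. By the implicit function theorem $ M_{b,c} $ is then a smooth $ n $-manifold, so the whole locus is special Lagrangian.
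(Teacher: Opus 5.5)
The paper gives no proof of this statement; it is quoted from Harvey--Lawson and used only as a source of examples. Your argument is therefore a self-contained proof rather than a variant of one in the text, and its main line is correct.

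The Poisson-commutation argument gives the Lagrangian condition on the full-rank locus. Your cofactor expansion gives $\Omega(\xi_1,\dots,\xi_{n-1},V)=i^{\,n-1}\,dh(V)$ with $h=z_1\cdots z_n$, so $c_n=1$ for $\xi_j=(0,\dots,iz_j,\dots,-iz_n)$. Consequently $\Omega(\xi_1,\dots,\xi_{n-1},X_g)=\pm i^{\,n}\bar\lambda\,|\partial h|^2$. This is real exactly when $\lambda=\pm1$ for $n$ even and $\lambda=\pm i$ for $n$ odd. So your parity bookkeeping is right, and the fallback you mention is unnecessary.

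You can in fact bypass $X_g$ and its sign conventions entirely. At a full-rank point, $T_zM$ is spanned by $\xi_1,\dots,\xi_{n-1}$ together with any tangent vector $V$ outside their span. Then $\Im\Omega(\xi_1,\dots,\xi_{n-1},V)=d\bigl(\Im(i^{\,n-1}h)\bigr)(V)=\pm\,dg(V)=0$. This holds because in both parities $g=\pm\Im(i^{\,n-1}h)$, and $g$ is constant on $M_{b,c}$.

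This contraction identity is the $U(1)^{n-1}$ analogue of the determinant computation in Lemma~\ref{LEM:Phi_I_Components}. It is the real content of the theorem and shows plainly why $\Re$ and $\Im$ alternate with $n$. It also yields the converse: a $U(1)^{n-1}$-invariant Lagrangian with free orbits is special Lagrangian if and only if $\Im(i^{\,n-1}h)$ is locally constant on it. Your Hamiltonian route, by contrast, buys a clean identification of $T_zM$ as the span of commuting Hamiltonian vector fields, which the paper's cross-product approach does not make explicit.

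One small point: you prove the special Lagrangian property on the full-rank locus, whereas the first sentence of the statement concerns the smooth locus. The rank of $(df_1,\dots,df_{n-1},dg)$ drops only where at least two of the $z_k$ vanish. Such points of $M_{b,c}$ lie in finitely many tori of dimension at most $n-2$. So wherever $M_{b,c}$ is nevertheless a smooth $n$-manifold, full-rank points are dense nearby. The conclusion then passes to the limit, since being special Lagrangian is a closed condition on tangent planes.
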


Comparing the Harvey-Lawson moment maps with those of Theorem~~\ref{THM:MainTheorem}, and writing $ |z_{n}|^{2} = x^{2} + u^{2} $, we determine that

\begin{equation}\label{EQ:HL_gauge}
    w + a_{j} = x^{2} + u^{2} + b_{j}, \quad j = 1, \dots, n-1.
\end{equation}

Rearranging, $ w = x^{2}+u^{2}+b_{j}-a_{j} $. Then

\begin{equation}
    P(w) = \prod_{j=1}^{n-1}(w+a_{j}) = \prod_{j=1}^{n-1}(x^{2}+u^{2}+b_{j})
\end{equation}

Equation \eqref{EQ:HarveyLawsonRealImaginaryEquation} can be converted into our language. Writing that $ z_{1} \cdots z_{n} = i^{3-n}((vx-yu) + i(vu+xy)) $. 

If $ n $ is even, then $ z_{1} \cdots z_{n} = (-1)^{\frac{n}{2}+1}i((vx-yu)+i(vu+xy)) $. Then,

\begin{equation*}
    \mathrm{Re}(z_{1} \cdots z_{n}) = (-1)^{\frac{n}{2}}(vu+xy).
\end{equation*}

If $ n $ is odd, then $ z_{1} \cdots z_{n} = (-1)^{\frac{(n-3)}{2}}((vx-yu)+i(vu+xy)) $. Then 

\begin{equation*}
    \mathrm{Im}(z_{1} \cdots z_{n}) = (-1)^{\frac{(n-3)}{2}}(vu+xy).
\end{equation*}

In either case, since $ c $ is arbitrary, we can absorb the sign and write

\begin{equation*}
    uv+xy = c.
\end{equation*}

The system of two equations that we have determined is then

\begin{equation}
    v^{2}+y^{2} = \prod_{j=1}^{n-1}(x^{2}+u^{2}+b_{j}-a_{j}), \quad xy + uv = c.
\end{equation}

As for the question of solving this, one can rearrange for $ v $, that is, $ v = \frac{c-xy}{u} $, and substitute this into $ v^{2}+y^{2} = \prod_{j=1}^{n-1}(x^{2}+u^{2}+b_{j}-a_{j}) $. For example, in dimension $ 4 $, this gives a quartic in $ u^{2} $. Since quartics are soluble, there exists an explicit solution.

\subsection{Affine and Perturbative Special Lagrangians}

We now describe a particularly simple class of explicit solutions of \eqref{EQ:MainEquations} and explain their geometric meaning. These examples yield concrete special Lagrangian
$ n $-folds and provide natural base points for perturbative deformation analysis.

\begin{lemma}[Affine solutions]\label{LEM:AffineSolutions}

Let $ \alpha, \beta, \gamma \in \mathbb{R} $. The functions 

\begin{equation*}
    u(x,y) = \alpha x + \beta,\qquad v(x,y)=\alpha y+\gamma 
\end{equation*}

satisfy the system \eqref{EQ:MainEquations}.

\end{lemma}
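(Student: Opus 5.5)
The verification is a direct substitution into \eqref{EQ:MainEquations}, and the plan is to check the two equations one at a time. The structural point is that both sides of the second equation vanish identically, so nothing about the nonlinearity $P'(w(v,y))$ enters beyond its being well defined.

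First I compute the four first-order partials of the proposed functions. From $u(x,y)=\alpha x+\beta$ we get $u_{x}=\alpha$ and $u_{y}=0$. From $v(x,y)=\alpha y+\gamma$ we get $v_{x}=0$ and $v_{y}=\alpha$. The first equation $u_{x}=v_{y}$ then reads $\alpha=\alpha$, which holds on all of $\mathbb{R}^{2}$.

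For the second equation, the left-hand side is $v_{x}=0$ and the right-hand side is $-P'(w(v,y))\cdot u_{y}=-P'(w(v,y))\cdot 0$. The only point needing care is that the right-hand side is a genuine finite number, so that it really equals zero. Since $w(v,y)$ is the distinguished real root of $P(w)=v^{2}+y^{2}$ with $w\geq-\min(a_{j})$ from Section~\ref{SUBSEC:InvariantCoordsAndTheReducedSpace}, and $P'$ is a polynomial, $P'(w(v,y))$ is a finite real number at every point. Here $v^{2}+y^{2}=(\alpha y+\gamma)^{2}+y^{2}$.

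This gives the result on all of $\mathbb{R}^{2}$ when $\min(a_{j})$ has multiplicity one. In the higher-multiplicity case, Theorem~\ref{THM:MainTheorem} only requires the system on the complement of $\{(v,y)=(0,0)\}$, and for these affine functions that set is just $\{y=0\}$ when $\gamma=0$ and is empty otherwise. Either way, both cases of Theorem~\ref{THM:MainTheorem} apply, and the associated $N$ is a $U(1)^{n-2}$-invariant special Lagrangian $n$-fold. I expect no real obstacle; the only thing to make explicit is this well-definedness of $w$ and $P'(w)$.
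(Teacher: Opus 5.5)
Your proposal is correct and is the same direct-substitution check the paper relies on. The paper gives no separate written proof of Lemma~\ref{LEM:AffineSolutions}, since $u_{x}=v_{y}=\alpha$ and $u_{y}=v_{x}=0$ make both equations of \eqref{EQ:MainEquations} hold identically, whatever the value of $P'(w(v,y))$; your remarks on the well-definedness of $w$ and on the locus $\{(v,y)=(0,0)\}$ (empty if $\gamma\neq 0$, the line $\{y=0\}$ if $\gamma=0$) are accurate but go beyond what the lemma asserts.
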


Geometrically, the corresponding special Lagrangian submanifold is given by

\begin{equation*}
    z_1\cdots z_{n-1}=v+iy=\alpha y+\gamma+iy,
\qquad
z_n=x+i(\alpha x+\beta).
\end{equation*}

Writing

\begin{equation*}
    z_n=(1+i\alpha)x+i\beta=e^{i\theta}x+i\beta,
\qquad
e^{i\theta}=\frac{1+i\alpha}{\sqrt{1+\alpha^{2}}},
\end{equation*}

we see that

\begin{equation*}
    \Im\!\left(e^{-i\theta}z_{n}\right)=-\sin\theta\,\beta
\end{equation*}

is constant.

Similarly, multiplying the invariant monomial by $ e^{i\theta} $ yields

\begin{equation*}
    \Im\!\left(e^{i\theta}z_{1}\cdots z_{n-1}\right)=\text{constant},
\end{equation*}

so the phase of $ z_{1} \cdots z_{n-1} $ is fixed.

It follows that the resulting special Lagrangian $ n $-fold splits as a product

\begin{align}
    N
&=
\Bigl\{(z_1,\dots,z_{n-1})\in\mathbb{C}^{n-1}:
|z_i|^2-|z_{n-1}|^2=a_i-a_{n-1},\
\Im\!\left(e^{i\theta}z_1\cdots z_{n-1}\right)=\text{constant}
\Bigr\}
\\
&\quad\times
\Bigl\{z_n\in\mathbb{C}:
\Im\!\left(e^{-i\theta}z_n\right)=\text{constant}
\Bigr\}.
\end{align}

The first factor is a $ U(1)^{n-2} $-invariant special Lagrangian
$ (n-1) $-fold in $ \mathbb{C}^{n-1} $ of phase $ e^{-i\theta} $.
The second factor is a straight line in $ \mathbb{C} $,
a special Lagrangian $ 1 $-fold of phase $ e^{i\theta} $.
Their product therefore has phase $ 1 $ and is special Lagrangian in
$ \mathbb{C}^{n} $.

These affine solutions represent the simplest product-type special
Lagrangians arising from our framework and provide natural base points
for perturbative analysis.

\section{Generalising Joyce's Analysis of the Reduced Equations}\label{SECT:GeneralisingJoycesAnalysisOnTheReducedEquations}

In this section, we explain how the analytic
framework developed in \cite{joyce2005u1, joyce2005u2, joyce2005u3} extends to the present higher-dimensional setting.

\subsection{Generating $ u, v $ From a Potential and the Dirichlet Problem}

In \cite{joyce2005u1, joyce2005u2, joyce2005u3}, Joyce shows that solutions $ u, v \in C^{1}(S) $ of \eqref{EQ:MainEquations} may be obtained from a scalar potential
$ f \in C^{2}(S) $ satisfying a second-order quasilinear elliptic equation. We now generalise this formulation to the present setting.

If $ (u, v) $ satisfy \eqref{EQ:MainEquations}, then the condition $ u_{x} = v_{y} $ implies the local existence of a potential function $ f $ such that
$ f_{y} = u $ and $ f_{x} = v $. This leads to the following formulation.

\begin{proposition}[Potential formulation]

Let $ S \subset \mathbb{R}^{2} $ be a simply-connected domain, and let $ u, v \in C^{1}(S) $ satisfy \eqref{EQ:MainEquations}. Then there exists $ f \in C^{2}(S) $, unique up to addition of a constant, such that

\begin{equation*}
    f_{x} = v, \qquad f_{y} = u.
\end{equation*}

Moreover, $ f $ satisfies the second-order equation

\begin{equation}\label{EQ:SecondOrderF}
    f_{xx} + P'(w)f_{yy} = 0,
\end{equation}

where $ w=w(f_{x}, y) $ is the unique real solution of $ P(w)=f_x^{2} + y^{2} $, satisfying $ w \geq - \min(a_{j}) $.

Conversely, any function $ f \in C^{2}(S) $ solving \eqref{EQ:SecondOrderF} yields a solution $ (u, v) = (f_{y}, f_{x}) $ of \eqref{EQ:MainEquations}.

\end{proposition}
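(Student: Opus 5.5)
The plan is the standard Poincaré-lemma argument, followed by direct substitution into \eqref{EQ:MainEquations}.

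\emph{Existence of $f$.} Given $u,v\in C^{1}(S)$ satisfying \eqref{EQ:MainEquations}, consider the $1$-form $\eta = v\,dx + u\,dy$ on $S$. It is $C^{1}$, and
\begin{equation*}
d\eta = (u_{x} - v_{y})\,dx\wedge dy .
\end{equation*}
By the first equation $u_{x}=v_{y}$ of \eqref{EQ:MainEquations}, $\eta$ is closed. Since $S$ is simply connected, the Poincaré lemma gives a function $f$ with $df=\eta$. Concretely, fix a base point $p_{0}\in S$ and set $f(p)=\int_{\gamma}\eta$ for any piecewise-$C^{1}$ path $\gamma$ from $p_{0}$ to $p$. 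Path independence follows from Green's theorem together with simple connectivity. Then $f_{x}=v$ and $f_{y}=u$. Because these first derivatives are $C^{1}$, we get $f\in C^{2}(S)$. Uniqueness up to a constant is immediate: the difference of two such potentials has zero gradient on the connected set $S$.

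\emph{Second-order equation.} Substitute into the second equation of \eqref{EQ:MainEquations}. From $v=f_{x}$ we get $v_{x}=f_{xx}$, and from $u=f_{y}$ we get $u_{y}=f_{yy}$. The relation $v_{x}=-P'(w(v,y))\,u_{y}$ then becomes exactly \eqref{EQ:SecondOrderF}, with $w=w(f_{x},y)$ the distinguished root of $P(w)=f_{x}^{2}+y^{2}$ from Section~\ref{SUBSEC:InvariantCoordsAndTheReducedSpace}. The first equation carries no extra information beyond the symmetry of mixed partials, $f_{xy}=f_{yx}$.

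\emph{Converse.} Given $f\in C^{2}(S)$ solving \eqref{EQ:SecondOrderF}, set $u=f_{y}$ and $v=f_{x}$. Both lie in $C^{1}(S)$. Clairaut's theorem gives $u_{x}=f_{yx}=f_{xy}=v_{y}$, and \eqref{EQ:SecondOrderF} rewrites as $v_{x}=-P'(w(v,y))u_{y}$.

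\emph{The only point needing care.} We must make sure $w(f_{x},y)$, and hence $P'(w)$, is well defined and continuous, so that \eqref{EQ:SecondOrderF} makes sense pointwise. This follows from the uniqueness of the distinguished root $w\geq-\min(a_{j})$ established earlier. I would note two further facts:
\begin{itemize}
\item In the multiplicity-one case $P'(w)>0$ on the relevant branch, which makes the equation elliptic. This is not needed for the equivalence itself.
\item In the higher-multiplicity case $P'(w)$ can vanish at $f_{x}=y=0$. The identity \eqref{EQ:SecondOrderF} is still a pointwise algebraic consequence of the substitution wherever $u,v$ are $C^{1}$, so the equivalence is unaffected.
\end{itemize}
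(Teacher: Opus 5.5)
Your proposal is correct and follows essentially the same route as the paper, which justifies the proposition only by the remark that $u_{x}=v_{y}$ yields a potential $f$ with $f_{x}=v$, $f_{y}=u$, after which the second equation of \eqref{EQ:MainEquations} becomes $f_{xx}+P'(w)f_{yy}=0$ by direct substitution. Your write-up supplies the details the paper leaves implicit: closedness of $v\,dx+u\,dy$ and global exactness on simply connected $S$, $C^{2}$ regularity, uniqueness up to constants, and the converse via equality of mixed partials.
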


Equation \eqref{EQ:SecondOrderF} is a second-order quasilinear elliptic equation
wherever $ P'(w)>0 $, and becomes degenerate precisely along the locus
$ P'(w)=0 $. This naturally leads to the study of the Dirichlet problem for
\eqref{EQ:SecondOrderF}. In the following, we establish existence and uniqueness
results for this problem, generalising Theorem~6.2 of
\cite{joyce2005u2}.

\begin{theorem}[Existence and uniqueness of the potential function $ f $]\label{THM:DirichletProblem}

Let $ S \subset \mathbb{R}^{2} $ be a domain. Fix $ k \geq 0 $ and $ \alpha \in (0, 1) $, and let $ \phi \in C^{k+3, \alpha}(\partial S) $. Fix parameters $ \vec{a} = (a_{1}, \dots, a_{n-1}) \in \mathbb{R}^{n-1} $ and define $ P(w) = \prod_{j = 1}^{n-1} (w + a_{j}) $.

If $ \min(a_{j}) $ has multiplicity one, then there exists a unique solution $ f \in C^{k+3, \alpha}(S) $ of the Dirichlet problem

\begin{equation*}
     \begin{cases}
f_{xx}+P'(w)\,f_{yy}=0 & \text{in } S,\\
f=\phi & \text{on } \partial S,
\end{cases}
    \end{equation*}

where $ w=w(f_{x},y) $ is determined implicitly by $ P(w)=f_{x}^{2} + y^{2} $. The associated functions $ u = f_{y} $ and $ v = f_{x} $ lie in $ C^{k+2,\alpha}(S) $
and satisfy \eqref{EQ:MainEquations}.

\end{theorem}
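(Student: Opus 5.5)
Following Joyce's proof of \cite[Theorem~6.2]{joyce2005u2}, the plan is to obtain $f$ by the continuity method for the quasilinear operator $Q(f) := f_{xx} + P'(w(f_x,y))\,f_{yy}$. I will assume, as in Joyce, that $S$ is a strictly convex bounded domain with smooth boundary; the statement is silent on this, but some such hypothesis is needed for the boundary estimates. When $\min(a_j)$ has multiplicity one, the distinguished root $w(v,y)$ of $P(w)=v^2+y^2$ is a simple root on the branch $w \geq -\min(a_j)$. Hence $P'(w)>0$ there, and $w$ is a smooth function of $(v,y)$ on all of $\mathbb{R}^2$, including the origin, by the implicit function theorem. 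The coefficient $b(p,y) := P'(w(p,y))$ is therefore smooth and strictly positive, with $b \geq b_0 := P'(-\min a_j) > 0$ uniformly. Moreover $b$ grows at most polynomially in $|p|$, since $w \sim (p^2+y^2)^{1/(n-1)}$. So the equation is uniformly elliptic on any set where $|Df|$ is bounded, with ellipticity constants depending only on that bound. This is precisely the structural input Joyce uses for $b = 2\sqrt{p^2+y^2+a^2}$, $a \neq 0$.

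The key steps, in order, are as follows.

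\begin{itemize}
\item \emph{Uniqueness.} If $f_1,f_2$ both solve the problem, then $g=f_1-f_2$ satisfies a linear equation $g_{xx} + b(f_{1,x},y) g_{yy} + c\,g_x = 0$. Here $c = f_{2,yy}\int_0^1 \partial_p b\,dt$, obtained by writing the difference of the $b$-terms via the mean value theorem in $p$. Also $g=0$ on $\partial S$, so the maximum principle gives $g\equiv 0$.

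\item \emph{A priori estimates.} The maximum principle gives $\sup|f| \leq \sup|\phi|$. For the gradient, on the boundary I will use barrier functions built from affine functions. These are exact solutions, as in Lemma~\ref{LEM:AffineSolutions} at the level of $f$: any affine $f$ solves the equation. Combined with strict convexity of $S$ and $\phi\in C^{3,\alpha}$, comparison gives a bound on $|Df|$ at $\partial S$. For the interior, differentiating the equation in $x$ shows that $v=f_x$ satisfies a linear elliptic equation without zeroth-order term, so $\sup|f_x|$ is attained on $\partial S$. For $f_y$, I will either use the two-dimensional Rad\'o-type argument (comparison of $f$ with its translates, i.e. Gilbarg--Trudinger Chapter~14 for equations with no $Df$-dependence in the principal structure beyond $b(f_x,y)$), or apply the $y$-derivative. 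The $y$-derivative produces a term $b_y f_{yy}$, which is controlled because $b_y$ is bounded on bounded gradient sets.

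\item \emph{Higher regularity.} Once $|Df|$ is bounded, the equation is uniformly elliptic in two variables. The Morrey/Nirenberg two-dimensional $C^{1,\beta}$ estimate (Gilbarg--Trudinger Theorem~12.4 and Section~13) then gives a $C^{1,\beta}$ bound, after which Schauder theory bootstraps to $C^{k+3,\alpha}(\overline S)$ using $\phi\in C^{k+3,\alpha}$.

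\item \emph{Existence.} Apply the Leray--Schauder fixed point theorem (Gilbarg--Trudinger Theorem~11.4) to the family $f_{xx} + (t\,b + (1-t))f_{yy}=0$, $t\in[0,1]$. The estimates above are uniform in $t$, since $tb+(1-t) \geq \min(1,b_0)$.

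\item \emph{Conclusion.} Finally, $u=f_y$ and $v=f_x$ lie in $C^{k+2,\alpha}$. By the converse in the potential formulation Proposition, they satisfy \eqref{EQ:MainEquations}.
\end{itemize}

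The main obstacle is the gradient estimate, specifically the bound on $f_y$ in the interior and the boundary barrier construction. This is where the polynomial growth of $b$ in $p$, rather than Joyce's linear growth, must be checked not to break the argument. I expect it not to, because in dimension two, for an equation of the form $f_{xx}+b(f_x,y)f_{yy}=0$, no growth condition on $b$ is needed for boundary gradient bounds via affine barriers on convex domains, which is why Joyce's proof transfers essentially verbatim.
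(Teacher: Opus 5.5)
Your proof is correct, but it takes a more explicit route than the paper's. The paper runs no elliptic argument itself. It checks that $F(s)=P'(w(s))$ is smooth on $[0,\infty)$, via the inverse function theorem at the simple root $w_0=-\min a_j$. It checks that $F$ is bounded below by $P'(w_0)>0$ and that $F$ is increasing. It then asserts that Joyce's nonsingular Dirichlet theory applies verbatim.

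You share the first two checks; your implicit-function argument at $(v,y)=(0,0)$ makes the same point. You then reconstruct the argument itself:
\begin{itemize}
\item comparison for uniqueness;
\item affine barriers and the bounded slope condition at $\partial S$;
\item the maximum principle for first derivatives;
\item the two-dimensional Morrey--Nirenberg $C^{1,\beta}$ estimate;
\item Schauder bootstrapping and Leray--Schauder.
\end{itemize}

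This shows exactly what is used: only smoothness and positivity of $b(p,y)=P'(w(p,y))$, together with the absence of lower-order terms. So the monotonicity (III) that the paper stresses plays no role. The growth $b\sim|p|^{2(n-2)/(n-1)}$, faster than Joyce's linear growth, is harmless, because the gradient bound comes from the bounded slope constant alone. Your version also makes visible that $S$ must be bounded and strictly convex with regular boundary. Joyce assumes this, but the paper's statement (``a domain'') omits it. The paper's route is shorter, but it rests on the unchecked claim that Joyce's proof uses nothing beyond (I)--(III).

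Two steps need tightening.
\begin{itemize}
\item For $u=f_y$, do not treat $b_y f_{yy}$ as a source term, since you have no a priori bound on $f_{yy}$. Instead use $f_{yy}=u_y$. Then $u_{xx}+b\,u_{yy}+(b_pf_{yy})\,u_x+b_y\,u_y=0$ has no zeroth-order term, and the maximum principle applies exactly as for $f_x$. Comparison with translates only works in the $x$-direction, because $b$ depends on $y$.
\item Gilbarg--Trudinger Theorem~11.4 deforms the boundary data, $f=\sigma\phi$, not the coefficients. Since there is no lower-order term, that family already works with your estimates. Your coefficient homotopy would instead need the general Leray--Schauder theorem, with the boundary data scaled as well.
\end{itemize}
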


Joyce's governing PDE system for $ U(1) $-invariant special Lagrangian submanifolds takes the form

\begin{equation*}
    u_{x} = v_{y}, \qquad v_{x} = -F(v^{2} + y^{2})u_{y},
\end{equation*}

with $ F(s) = 2 \sqrt{s + a^{2}} $. The associated potential equation is

\begin{equation*}
    f_{xx} + F(f_{x}^{2} + y^{2})f_{yy} = 0,
\end{equation*}

where the function $ F $ satisfies monotonicity, positivity, and regularity
assumptions to ensure ellipticity and compactness of solutions.

 In the present setting, the reduced equations take the same form, with $ F(s) = P'(w) $, where $ w = w(s) $ is determined implicitly by $ P(w) = s $. In the following, we verify that $ P'(w) $ satisfies the same structural
assumptions, and hence Joyce’s analytic results extend, without additional proof needed.

Consider the non-singular regime, in which $ \min(a_{j}) $ has multiplicity one. Let $ w_{0} = - \min(a_{j}) $, the smallest real root of $ P $, and define $ F(s) := P'(w(s)) $, where $ s \geq 0 $, and where $ w(s) $ is the unique real solution of $ P(w) = s $ satisfying $ w \geq w_{0} $.

     \begin{itemize}
     
         \item[(I.)] Since $ P $ is a polynomial, $ P' $ is smooth on $ [w_{0}, \infty) $. As $ P'(w)>0 $ on this interval, the inverse function theorem implies that
        $ w(s) $ depends smoothly on $ s $, and hence $ F $ is smooth on $ [0,\infty) $.

         \item[(II.)] Since $ w_{0} $ is a simple root of $ P $, we have $ P'(w_{0}) > 0 $. As $ P' $ has no zeroes on $ (w_{0}, \infty) $, it follows that $ P'(w) > 0 $ for all $ w > w_{0} $. Thus, $ F(s) \geq P'(w_{0}) > 0 $ for all $ s \geq 0 $, and the equation \eqref{EQ:SecondOrderF} is elliptic, with $ P' $ bounded from above in the domain.
         
         \item[(III.)] As $ P'(w) $ is positive on $ (w_{0},\infty) $ and $ w(s) $ is strictly increasing in
        $ s $, it follows that $ F(s) $ is increasing in $ s $.
        Equivalently, the coefficient $ A(s)=1/F(s) $ is decreasing, which is the
        monotonicity condition used in Joyce’s comparison and maximum principle
    arguments.
         
     \end{itemize}

     Therefore, $ F $ satisfies all the structural hypotheses required in Joyce’s
    analysis of the Dirichlet problem in the non-singular case, and the existence
    and uniqueness theorem follows by the same elliptic arguments as in
    \cite{joyce2005u1, joyce2005u2, joyce2005u3}.

     We expect that Joyce's analytic results in the singular case should extend to the current setting, however we do not seek proof of this here, as the required analysis is substantial \cite{joyce2005u2, joyce2005u3}. As for some brief remarks, Joyce treats the singular regime by passing to the potential formulation $ f_{x} = v $, $ f_{y} = u $, so that the system becomes a quasilinear equation of the form $ f_{xx}+F(f_x^{2}+y^{2})\,f_{yy}=0 $, where $ F $ is positive and may degenerate at the singular locus. The heart of the argument
is to develop a priori estimates for $ f $ (and hence for $ (u,v) $) which are uniform under
appropriate approximations, and then to solve the Dirichlet problem by compactness and
barrier methods.  In particular, Joyce imposes geometric hypotheses on the domain (notably
a symmetry under $ (x,y) \mapsto (x,-y) $ together with strict convexity conditions) which
enable the construction of barriers and the application of maximum-principle and winding
number arguments near the singular set. 

In our case, the potential equation has the same structure with
$ F(s)=P'(w(s)) $, where $ w(s) $ is defined implicitly by $ P(w)=s $ on the distinguished real
branch $ w \ge - \min(a_{j}) $.  We therefore expect that, under the same type of domain
hypotheses and with suitable control of the degeneracy locus $ P'(w)=0 $, Joyce's singular
Dirichlet theory and regularity results should carry over with only notational changes.
We leave a detailed verification to those who are interested.

\subsection{Winding Number Techniques}

We will say a few works on the winding number technique which Joyce describes \cite{joyce2005u1, joyce2005u2, joyce2005u3}. Throughout this subsection, let $ S \subset \mathbb{R}^{2} $ be a bounded domain, and let $ (u_{1}, v_{1}) $ and $ (u_{2}, v_{2}) $ denote two solutions of the reduced system
\eqref{EQ:MainEquations} on $ S $.  When $ P'(w)>0 $ on $ S $, we assume $ (u_{j}, v_{j}) \in C^{1}(S) $ for $ j=1,2 $, and when $ P'(w)=0 $
we allow $ (u_{j}, v_{j}) $ to be singular solutions.

Define the difference map

\begin{equation}\label{EQ:HolomorphicDifferenceMap}
F = (u_{1} - u_{2},\;v_{1} - v_{2}): S \rightarrow \mathbb{R}^{2} .
\end{equation}

We study the zeros of $ F $ via winding number, following Joyce.

Let $ C $ be a compact oriented $ 1 $-manifold, and $ \gamma: C \rightarrow \mathbb{R}^{2} \setminus \{ 0 \} $ smooth. Define

\begin{equation*}
    \mathrm{Wind}(\gamma,0)=\frac{1}{2\pi} \int_{C} \gamma^{*}(d\theta),
\qquad d\theta=\frac{x\,dy-y\,dx}{x^{2} + y^{2}}.
\end{equation*}

In the holomorphic setting, winding numbers are related to multiplicities of zeroes via the argument principle. A point $ (b,c) \in  S $ is a zero of $F$ if $ F(b,c)=0 $.
Assume $ (b,c) $ is an isolated zero, that is, $ F\neq 0 $ on $ \gamma_{\epsilon}(b,c) = \partial B_{\epsilon}(b,c) $ for some sufficiently small $ \epsilon > 0 $.
Then we define its multiplicity by

\begin{equation*}
   \mathrm{mult}(b,c):=\mathrm{Wind}(F|_{\gamma_{\epsilon}(b,c)},0).
\end{equation*}

In \cite[Theorem~7.4]{joyce_special_2002}, Joyce proves that the multiplicity of any isolated
zero $ (b,c) $ of $ (u_{1},v_{1}) - (u_{2},v_{2}) $ in $ S^{\circ} $ is a positive integer. We expect that the winding number results in \cite{joyce2005u1, joyce2005u2, joyce2005u3} extends to this paper's structure with no significant change.

\bibliographystyle{plainnat} 
\bibliography{references}    

\end{document}